\documentclass[12pt, reqno]{amsart}
\usepackage{amsmath, amsthm, amscd, amsfonts, amssymb, graphicx, color, mathrsfs}
\usepackage[bookmarksnumbered, colorlinks, plainpages]{hyperref}
\usepackage{tikz}
\usetikzlibrary{arrows.meta}
\hypersetup{colorlinks=true,linkcolor=red, anchorcolor=green, citecolor=cyan, urlcolor=red, filecolor=magenta, pdftoolbar=true}
\usepackage{hyperref}
\newtheorem{theorem}{Theorem}[section]
\newtheorem{lemma}[theorem]{Lemma}

\theoremstyle{definition}

\theoremstyle{remark}
\newtheorem{remark}[theorem]{Remark}
\numberwithin{equation}{section}

\begin{document}

\setcounter{page}{1}

\title[Integral representations and asymptotic behaviors \dots ]{Integral representations and asymptotic behaviors of the Multivariate Mittag-Leffler function}

\author[D. Shamuratov]{Damir Shamuratov}

\thanks{}

\subjclass[2010]{}

\keywords{}

\begin{abstract}
 In this paper, a multivariate Mittag--Leffler-type function arising in the theory of fractional differential equations with several fractional parameters is investigated. New Hankel contour integral representations are derived for the three-variable Mittag--Leffler function, and complete asymptotic expansions are established in different sectors of the complex plane. The proposed approach is based on the classical Hankel integral representation of the reciprocal Gamma function together with suitable contour transformations. Furthermore, the corresponding integral representations and asymptotic expansions are established for the multivariate Mittag--Leffler function with an arbitrary number of variables. The obtained formulas extend several known results for one- and two-variable Mittag--Leffler functions and provide useful analytical tools for the qualitative analysis of fractional differential equations involving multiple fractional derivatives.
\end{abstract} \maketitle

\section{Introduction}

        Fractional differential equations have attracted considerable attention during the last few decades due to their ability to provide more accurate mathematical models for numerous real-world phenomena than classical integer-order differential equations. Such equations naturally arise in various areas of science and engineering, including anomalous diffusion, viscoelasticity, heat conduction, porous media, control theory, signal processing, electrical circuits, biology, finance, and many other fields. The presence of memory and hereditary effects makes fractional differential equations particularly suitable for describing complex dynamical processes.

One of the most important mathematical tools in the theory of fractional differential equations is the Mittag--Leffler function. Similar to the role played by the exponential function in classical differential equations, Mittag--Leffler functions naturally appear in explicit solution formulas of fractional ordinary and partial differential equations. Consequently, their analytical properties are of fundamental importance for investigating qualitative properties of solutions, such as existence, uniqueness, stability, decay estimates, long-time asymptotic behavior, and inverse problems.

For this reason, various generalizations of the classical Mittag--Leffler function have been introduced and extensively investigated in the literature \cite{Gorenflo},\cite{Kilbas},\cite{Miller},\cite{Podlubny}. Their integral representations, asymptotic expansions, recurrence relations, differentiation formulas, and other analytical properties have been studied by many authors. These investigations have significantly contributed to the development of fractional calculus and its applications.

In \cite{Ogorodnikov1}, the authors considered the following fractional differential equations:
\begin{equation}
\ddot{u}(t)+pD_{0t}^{\beta}\dot{u}(t)+qD_{0t}^{2\beta}u(t)=f(t),
\label{eq:Ogorotnikov1}
\end{equation}
and
\begin{equation}
\ddot{u}(t)+pD_{0t}^{1+\beta}u(t)+qD_{0t}^{2\beta}u(t)=f(t),
\label{eq:Ogorotnikov2}
\end{equation}
where \(p,q\in\mathbb{R}\), \(\beta\in(0,\frac12)\), and \(f(t)\in L(0,T)\) is a given function.
 In the process of deriving explicit
solution formulas, the composition of two Mittag--Leffler integral operators was
evaluated. This led to the introduction of the integral operator
\[
E^{\mu;\alpha,\beta}_{0t;\lambda_1,\lambda_2}f
=
\int_0^t
(t-\tau)^{\mu-1}
E_{\alpha,\beta}\!\left(
\lambda_1(t-\tau)^\alpha,
\lambda_2(t-\tau)^\beta;\mu
\right)
f(\tau)\,d\tau,
\]
whose kernel is the two-variable Mittag--Leffler function
\[
E_{\alpha,\beta}(x,y;\mu)
=
\sum_{n,m=0}^{\infty}
\frac{x^n y^m}
{\Gamma(n\alpha+m\beta+\mu)}.
\]

Equations \eqref{eq:Ogorotnikov1} and \eqref{eq:Ogorotnikov2} can be considered as model differential equations of fractional oscillators (see \cite{Machado}). 
A general linear fractional oscillator with constant coefficients is governed by the equation (see \cite{Ogorodnikov1}, \cite{Ogorodnikov2})
\begin{equation}\label{eq:ogorotnikov3}
\sum_{k=0}^{n}a_kD_{0t}^{\alpha_k}\dot{u}(t)
+
\sum_{k=0}^{m}b_kD_{0t}^{\beta_k}u(t)
=
f(t),
\end{equation}
where \(a_k,b_k\in\mathbb{R}\), \(\alpha_k\in(0,1]\), \(\beta_k\in[0,2)\), and \(f(t)\) is a prescribed external force. 

The solutions of such equation involve multivariate Mittag-Leffler-type functions similar to (see \cite{Gorenflo}, p. 195) 

\begin{equation}\label{multi variable ML}
E_{\alpha_1,\dots,\alpha_n}(z_1,\dots,z_n;\mu)
=
\sum_{k_1,k_2,\dots,k_n=0}^{\infty}
\frac{\prod_{j=1}^{n}z_j^{k_j}}
{\Gamma\!\left(\mu+\sum_{j=1}^{n}\alpha_jk_j\right)},
\end{equation}
where $\alpha_j>0$ (\(j=1,\ldots,n\)), and \(\mu, z_1,z_2,\dots,z_n\in\mathbb{C}\).

 Multi-term fractional differential equations and \eqref{eq:ogorotnikov3} fractional differential equations and related fractional evolution equations continue to attract considerable attention due to their ability to model complex dynamical processes involving multiple memory effects. In the analysis of such equations, multivariable, multinomial, and multi-index Mittag--Leffler functions play a fundamental role, as they naturally arise in explicit solution representations of fractional differential equations with several fractional orders (for more information, see \cite{Ashurov},\cite{Kiryakova},\cite{Yamamoto},\cite{Luchko},\cite{Ogorodnikov2},\cite{Umarov}). Consequently, investigating the analytical properties of these functions, including their integral representations and asymptotic behavior, is of significant importance for both the theory and applications of  fractional differential equations.

The main purpose of the present paper is to investigate a multivariate Mittag--Leffler-type function of $n$ variables. We derive new Hankel integral representations and establish complete asymptotic expansions in different sectors of the complex domain. The obtained results extend several previously known asymptotic formulas for one- and two-variable Mittag--Leffler functions and provide useful analytical tools for the study of fractional differential equations involving several fractional parameters.

The remainder of this paper is organized as follows. In Section \ref{sec2}, we present the necessary preliminaries and recall several known results on two-variable Mittag--Leffler functions that will be used throughout the paper. In Section \ref{sec3}, we derive new Hankel integral representations for the three-variable Mittag--Leffler function. Section \ref{sec4} is devoted to the derivation of complete asymptotic expansions in different sectors of the complex plane. Finally, Section \ref{sec5} extends the obtained results to the general multivariate Mittag--Leffler function and presents the corresponding asymptotic formulas.

\section{Preliminaries}\label{sec2}

The derivation of the integral representations and asymptotic expansions presented in this paper is based on the Hankel contour method. In particular, the Hankel contour allows us to employ the classical integral representation of the reciprocal Gamma function and serves as the main analytical tool in the subsequent analysis. For the reader's convenience, we first recall its definition and notation.

The Hankel path considered in Lemmas \ref{l2.1}, \ref{l2.2}, \ref{l2.3},
\ref{l3.1}, \ref{l3.2}, \ref{l5.1}, \ref{l5.2} and Theorems
\ref{thm4.1}, \ref{thm5.3} is denoted by
$
\omega(\varepsilon;\eta),
$
and is oriented in the direction of non-decreasing $\arg s$.
It consists of the following parts:
\begin{itemize}
\item[(1)] the two rays
$
S_\eta = \{\arg s = \eta,\ |s| \ge \varepsilon\},
\qquad
S_{-\eta} = \{\arg s = -\eta,\ |s| \ge \varepsilon\};
$

\item[(2)] the circular arc
$
C_\eta(0;\varepsilon)
=
\{|s|=\varepsilon,\ -\eta \le \arg s \le \eta\}.
$
\end{itemize}

If $0<\eta<\pi$, then the contour $\omega(\varepsilon;\eta)$ divides the complex $s$-plane into two unbounded regions, namely
$
\Omega^{(-)}(\varepsilon;\eta)
$ to the left of $ \omega(\varepsilon;\eta),
$
and
$
\Omega^{(+)}(\varepsilon;\eta)$
 to the right of $\omega(\varepsilon;\eta)
$ (see Figure 1).

\begin{figure}[ht]
\centering
\begin{tikzpicture}[scale=3,>=stealth]

\def\r{0.55}
\def\a{140}

\fill[gray!25]
(-1.05,0.78)
--
({\r*cos(\a)},{\r*sin(\a)})
arc (\a:-140:\r)
--
(-1.05,-0.78)
-- cycle;

\draw[->] (-1.05,0) -- (1.25,0);
\draw[->] (0,-0.95) -- (0,1.05);

\draw[line width=1pt]
({\r*cos(\a)},{\r*sin(\a)})
arc (\a:-140:\r);

\draw[->,line width=1pt]
({\r*cos(-55)},{\r*sin(-55)})
arc (-55:-35:\r);

\draw[line width=1pt]
({\r*cos(\a)},{\r*sin(\a)})
-- (-0.98,0.74);

\draw[line width=1pt]
({\r*cos(220)},{\r*sin(220)})
-- (-0.98,-0.74);

\draw[dashed]
(0,0)--({0.42*cos(\a)},{0.42*sin(\a)});

\draw[dashed]
(0,0)--({0.42*cos(-\a)},{0.42*sin(-\a)});

\draw (0.18,0) arc (0:\a:0.18);
\draw (0.15,0) arc (0:-\a:0.15);

\node at (0.12,0.23) {$\eta$};
\node at (0.13,-0.24) {$-\eta$};

\node at (-0.68,0.67) {$S_{\eta}$};
\node at (-0.63,0.25) {$\Omega^{(-)}(\varepsilon;\eta)$};

\node at (-0.70,-0.70) {$S_{-\eta}$};

\node at (0.62,0.82)
{$\Omega^{(+)}(\varepsilon;\eta)$};

\node at (0.60,0.32) {$C_{\eta}$};

\node at (0.58,-0.50)
{$\omega(\varepsilon;\eta)$};

\draw (0,0)--({\r*cos(35)},{\r*sin(35)});
\node at (0.34,0.18) {$\varepsilon$};

\end{tikzpicture}
\caption{The Hankel contour $\omega(\varepsilon;\eta)$.}
\end{figure}

If $\eta=\pi$, then the contour consists of the circle
$
\{|s|=\varepsilon\}
$
and the ray $-\infty < s \le -\varepsilon$ ($\varepsilon>0$), which is a two-way path along the real axis.
More precisely, this keyhole (Hankel) contour is a path starting from $-\infty$, moving along the real axis to $-\varepsilon$, then going counterclockwise around a circle of radius $\varepsilon$ centered at $0$, returning to $-\varepsilon$, and finally continuing back to $-\infty$ along the real axis.

Since the proofs of the integral representations and asymptotic expansions for the multivariate Mittag--Leffler function are based on the same ideas and arguments as those for the one-, two-, and three-variable cases, we first derive these results for the three-variable Mittag--Leffler function. This approach allows us to present the essential techniques without unnecessary repetition, while the extension to the general multivariable case follows in a straightforward manner.
The three-variable Mittag–Leffler function is defined by
$$E_{\alpha,\beta,\gamma}(x,y,z;\mu)
=
\sum_{n,m,p=0}^{\infty}
\frac{x^n y^m z^p}
{\Gamma(n\alpha+m\beta+p\gamma+\mu)},$$
where \(\alpha,\beta,\gamma>0\), and \(\mu,x,y,z\in\mathbb{C}\).

We recall the following results established by C. Lavault. 
\begin{lemma}\cite{Lavault}\label{l2.1}
    Let $0 < \alpha, \beta < 2$ and $\alpha\beta < 2$. Let $\mu$ be any complex number and let $\eta$ satisfy the condition
\begin{equation}\label{2.1}
\frac{\pi\alpha\beta}{2}
<
\eta
\leq
\min\!\bigl(\pi,\pi\alpha\beta\bigr).
\end{equation}

If
$
x \in \Omega^{(-)}(\varepsilon_\alpha;\eta_\alpha)
\quad\text{and}\quad
y \in \Omega^{(-)}(\varepsilon_\beta;\eta_\beta),
$
where
$
\varepsilon_\alpha:=\varepsilon^{1/\beta},
\varepsilon_\beta:=\varepsilon^{1/\alpha},
\eta_\alpha:=\cfrac{\eta}{\beta},
\eta_\beta:=\cfrac{\eta}{\alpha},
$
then the following Hankel integral representation holds:
\[
E_{\alpha,\beta}(x,y;\mu)
=
\frac{1}{2\pi i}\,
\frac{1}{\alpha\beta}
\int_{\omega(\varepsilon;\eta)}
e^{s^{1/(\alpha\beta)}}
s^{\frac{\alpha+\beta+1-\mu}{\alpha\beta}-1}
\frac{1}
{\left(s^{1/\alpha}-y\right)
 \left(s^{1/\beta}-x\right)}
\,ds .
\]
\end{lemma}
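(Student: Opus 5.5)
The natural route is to start from the classical Hankel formula for the reciprocal Gamma function and substitute a fractional power, so that the double series defining $E_{\alpha,\beta}(x,y;\mu)$ becomes a product of two geometric series under one integral. The steps below first prove the representation for small $|x|,|y|$ and then extend it by analytic continuation.

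\emph{Step 1: Hankel formula in the $t$-variable.} For any $\theta$ with $\pi/2<\theta\le\pi$ and any $\delta>0$,
\[
\frac{1}{\Gamma(z)}=\frac{1}{2\pi i}\int_{\omega(\delta;\theta)} e^{t}\,t^{-z}\,dt, \qquad z\in\mathbb{C}.
\]

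\emph{Step 2: the substitution $t=s^{1/(\alpha\beta)}$.} Using principal branches, this maps the contour $\omega(\varepsilon;\eta)$ in the $s$-plane onto $\omega(\varepsilon^{1/(\alpha\beta)};\eta/(\alpha\beta))$ in the $t$-plane, preserving orientation. Two constraints are needed:
\begin{itemize}
\item[(i)] $\eta\le\pi$, so that $s^{1/(\alpha\beta)}$ is single-valued on the contour;
\item[(ii)] $\pi/2<\eta/(\alpha\beta)\le\pi$, so that Step 1 applies.
\end{itemize}
Together these are exactly condition \eqref{2.1}; in particular $\alpha\beta<2$ makes the interval nonempty. Since $dt=\frac{1}{\alpha\beta}s^{1/(\alpha\beta)-1}\,ds$, we obtain
\[
\frac{1}{\Gamma(n\alpha+m\beta+\mu)}=\frac{1}{2\pi i}\,\frac{1}{\alpha\beta}\int_{\omega(\varepsilon;\eta)} e^{s^{1/(\alpha\beta)}}\, s^{\frac{1-\mu}{\alpha\beta}-1}\, s^{-n/\beta}\,s^{-m/\alpha}\,ds .
\]

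\emph{Step 3: summation for small arguments.} Assume first that $|x|<\varepsilon^{1/\beta}=\varepsilon_\alpha$ and $|y|<\varepsilon^{1/\alpha}=\varepsilon_\beta$. Every $s$ on the contour has $|s|\ge\varepsilon$, so
\[
|x\,s^{-1/\beta}|\le |x|\,\varepsilon^{-1/\beta}<1 \quad\text{and}\quad |y\,s^{-1/\alpha}|\le |y|\,\varepsilon^{-1/\alpha}<1 ,
\]
and both bounds are uniform along the contour. On the rays, $\arg s^{1/(\alpha\beta)}=\pm\eta/(\alpha\beta)$, which has modulus greater than $\pi/2$ in absolute value. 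Hence $|e^{s^{1/(\alpha\beta)}}|$ decays like $\exp(-c|s|^{1/(\alpha\beta)})$ for some $c>0$. Multiplying Step 2 by $x^ny^m$ and summing, the double series is therefore dominated by an integrable function, so summation and integration may be interchanged. The geometric sums give
\[
\sum_{n\ge 0}x^n s^{-n/\beta}=\frac{s^{1/\beta}}{s^{1/\beta}-x},\qquad \sum_{m\ge 0}y^m s^{-m/\alpha}=\frac{s^{1/\alpha}}{s^{1/\alpha}-y}.
\]
Collecting exponents, $\frac{1-\mu}{\alpha\beta}-1+\frac1\beta+\frac1\alpha=\frac{\alpha+\beta+1-\mu}{\alpha\beta}-1$, which yields the stated formula.

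\emph{Step 4: analytic continuation.} As $s$ runs over $\omega(\varepsilon;\eta)$, $s^{1/\beta}$ runs over the contour $\omega(\varepsilon_\alpha;\eta_\alpha)$ and $s^{1/\alpha}$ over $\omega(\varepsilon_\beta;\eta_\beta)$. Hence for $x\in\Omega^{(-)}(\varepsilon_\alpha;\eta_\alpha)$ and $y\in\Omega^{(-)}(\varepsilon_\beta;\eta_\beta)$ the denominators never vanish on the path. Locally uniformly in $(x,y)$ they stay bounded away from zero, and in fact grow along the rays. By Morera's theorem and the exponential decay from Step 3, the right-hand side is holomorphic in $(x,y)$ on this product domain. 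The left-hand side is entire. Each region $\Omega^{(-)}$ is connected and contains the small disc used in Step 3, so the identity theorem extends the equality to the whole product domain.

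The main obstacle is this last step, namely making precise that the image contours are exactly the boundaries of the stated regions and that the regions are connected. This is delicate when $\eta_\alpha=\eta/\beta$ or $\eta_\beta=\eta/\alpha$ exceeds $\pi$, where ``to the left of'' has to be read on the appropriate Riemann surface or sector. There I would treat the image curve as a sector boundary and argue directly that it separates the disc from the excluded region. The uniform domination in Step 3 is routine by comparison.
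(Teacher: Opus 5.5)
Your proof is correct, and it differs from the paper's route mainly in its starting point. The paper cites Lavault for this lemma, but its proof of the three-variable analogue, Lemma~\ref{l3.1}, shows the intended scheme. For two variables it reads as follows:
write $E_{\alpha,\beta}(x,y;\mu)=\sum_{n}x^nE_\beta(y;n\alpha+\mu)$; insert the textbook one-variable representation of $E_\beta(y;\cdot)$ on $\omega(\varepsilon_\beta;\eta_\beta)$, which already holds for all $y\in\Omega^{(-)}(\varepsilon_\beta;\eta_\beta)$; sum only the geometric series in $x\,t^{-\alpha/\beta}$ for small $|x|$; substitute $t=s^{1/\alpha}$; and continue analytically.

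You instead inline the proof of that one-variable representation. Your Step~2 is exactly the formula \eqref{gamma integral representation} that the paper itself uses in Theorem~\ref{thm4.1}, and you sum both geometric series at once for small $|x|,|y|$, then continue jointly in $(x,y)$. The paper's route recycles a known theorem and needs small-argument summation in fewer variables. Yours is symmetric and self-contained, and its only angular requirement at the summation stage is exactly \eqref{2.1}, as your (i)--(ii) shows. The paper's route also needs the intermediate contour angle to be at most $\pi$ (in Lemma~\ref{l3.1}, $\eta_\gamma=\eta/(\alpha\beta)\le\pi$). This is not implied by \eqref{3.1} when $\gamma>1$, although the paper calls the choice admissible.

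The obstacle you flag in Step~4 is real, but it is a defect of the statement, not of your argument, and it resolves more simply than a Riemann-surface reading. The sets $\Omega^{(\pm)}(\varepsilon;\eta)$ are defined only for $\eta\le\pi$. Yet \eqref{2.1} allows $\eta_\alpha=\eta/\beta>\pi$; this happens exactly when $\alpha>1>\beta$ and $\eta>\pi\beta$.

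In that case the image of the arc under $s\mapsto s^{1/\beta}$ is the whole circle $|x|=\varepsilon_\alpha$. So the component of the complement of the image curve containing your disc is the disc itself, and continuation adds nothing. No larger region can work either. For $|x|>\varepsilon_\alpha$, the principal point $s=x^{\beta}$ satisfies $s^{1/\beta}=x$, $|s|>\varepsilon$ and $|\arg s|\le\beta\pi<\eta$. It is therefore a pole of the integrand lying in $\Omega^{(+)}(\varepsilon;\eta)$, and the integral differs from $E_{\alpha,\beta}$ by residue terms of the type in Lemma~\ref{l2.2}.

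So in that regime, read $\Omega^{(-)}(\varepsilon_\alpha;\eta_\alpha)$ as the open disc $|x|<\varepsilon_\alpha$, and likewise for $y$ when $\eta/\alpha>\pi$. Do not try to extend the formula to an outer sector.
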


\begin{lemma}\cite{Lavault}\label{l2.2}
    Let $0 < \alpha, \beta < 2$ and $\alpha\beta < 2$. Let $\mu$ be any complex number and let $\eta$ satisfy the condition \eqref{2.1}.
If
$
x \in \Omega^{(+)}(\varepsilon_\alpha;\eta_\alpha)
\quad\text{and}\quad
y \in \Omega^{(+)}(\varepsilon_\beta;\eta_\beta),
$
where
$
\varepsilon_\alpha:=\varepsilon^{1/\beta},
\varepsilon_\beta:=\varepsilon^{1/\alpha},
\eta_\alpha:=\cfrac{\eta}{\beta},
\eta_\beta:=\cfrac{\eta}{\alpha},
$
then the following Hankel integral representation holds:
\[
E_{\alpha,\beta}(x,y;\mu)
=
\frac{1}{\alpha}
\frac{e^{x^{1/\alpha}}x^{\frac{1+\beta-\mu}{\alpha}}}
{x^{\beta/\alpha}-y}
+
\frac{1}{\beta}
\frac{e^{y^{1/\beta}}y^{\frac{1+\alpha-\mu}{\beta}}}
{y^{\alpha/\beta}-x}
\]
\[+\frac{1}{2\pi i}\,
\frac{1}{\alpha\beta}
\int_{\omega(\varepsilon;\eta)}
e^{s^{1/(\alpha\beta)}}
s^{\frac{\alpha+\beta+1-\mu}{\alpha\beta}-1}
\frac{1}
{\left(s^{1/\alpha}-y\right)
 \left(s^{1/\beta}-x\right)}
\,ds .
\]
\end{lemma}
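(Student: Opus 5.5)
The plan is to obtain Lemma \ref{l2.2} from Lemma \ref{l2.1} by enlarging the radius of the Hankel contour and applying the residue theorem. The two explicit terms should appear as the residues at the two simple poles of the integrand that the enlarged contour sweeps past.

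First, fix $x\in\Omega^{(+)}(\varepsilon_\alpha;\eta_\alpha)$ and $y\in\Omega^{(+)}(\varepsilon_\beta;\eta_\beta)$. These conditions mean $|\arg x|<\eta/\beta$, $|x|>\varepsilon^{1/\beta}$, $|\arg y|<\eta/\alpha$ and $|y|>\varepsilon^{1/\alpha}$. I choose $\varepsilon'>\max(|x|^\beta,|y|^\alpha)$. Then $x\in\Omega^{(-)}(\varepsilon'^{1/\beta};\eta_\alpha)$ and $y\in\Omega^{(-)}(\varepsilon'^{1/\alpha};\eta_\beta)$. Lemma \ref{l2.1}, which holds for any admissible radius, therefore gives $E_{\alpha,\beta}(x,y;\mu)$ as the same integral taken over $\omega(\varepsilon';\eta)$.

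Second, I compare the integrals over $\omega(\varepsilon';\eta)$ and $\omega(\varepsilon;\eta)$. Their rays $S_{\pm\eta}$ overlap for $|s|\ge\varepsilon'$, so these parts cancel. What remains is the closed contour $\omega(\varepsilon';\eta)-\omega(\varepsilon;\eta)$, which bounds the annular sector
\[
\{\varepsilon<|s|<\varepsilon',\ |\arg s|<\eta\}.
\]
With the orientation of non-decreasing $\arg s$, I expect this boundary to be traversed positively, but I will check the orientation carefully. The integrand is analytic there on the principal branch, since $\eta\le\pi$, except at the zeros of the two denominator factors. These zeros are $s=x^\beta$ and $s=y^\alpha$, and the hypotheses on $x,y$ place both inside the sector. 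Hence
\[
\int_{\omega(\varepsilon';\eta)}-\int_{\omega(\varepsilon;\eta)}=2\pi i\,(\text{sum of the two residues}).
\]

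Third, I compute the residues. At $s=x^\beta$ the factor $1/(s^{1/\beta}-x)$ has residue $\beta s^{1-1/\beta}\big|_{s=x^\beta}=\beta x^{\beta-1}$. Multiplying by $\frac{1}{\alpha\beta}e^{x^{1/\alpha}}x^{\beta(\frac{\alpha+\beta+1-\mu}{\alpha\beta}-1)}/(x^{\beta/\alpha}-y)$, the exponent of $x$ simplifies to $\frac{\alpha+\beta+1-\mu}{\alpha}-\beta+\beta-1=\frac{1+\beta-\mu}{\alpha}$. This gives exactly the first term of Lemma \ref{l2.2}. The computation at $s=y^\alpha$ is symmetric and gives the second term.

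The main obstacles are twofold. The first is bookkeeping: the branch identities $(x^\beta)^{1/\beta}=x$ and $(x^\beta)^{1/(\alpha\beta)}=x^{1/\alpha}$ require $|\arg x|<\pi/\beta$, which follows from $\eta\le\pi$, and the orientation sign must come out as stated. The second is the degenerate case $x^{\beta/\alpha}=y$, where the two poles merge into a double pole. Both sides of the identity are continuous there: the singularities of the two explicit terms cancel and the integral is unaffected. So I would prove the formula off this set and extend it by continuity, or simply exclude that case.
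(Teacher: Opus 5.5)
Your proof is correct and uses the same mechanism the paper uses for the three-variable analogue, Lemma \ref{l3.2} (Lemma \ref{l2.2} itself is only cited from Lavault, not proved): enlarge the Hankel radius until the points lie in $\Omega^{(-)}$, apply the left-region representation, and pick up the explicit terms as residues by Cauchy's theorem, the only difference being that you capture both poles $x^\beta$ and $y^\alpha$ in one enlargement in the common $s$-plane, whereas the paper changes variables and peels off one variable at a time. You should state why these are the \emph{only} poles in the sector $|\arg s|<\eta$: this holds because $\eta/\beta\le\pi$ and $\eta/\alpha\le\pi$ (implicit in $\Omega^{(+)}(\varepsilon_\alpha;\eta_\alpha)$ and $\Omega^{(+)}(\varepsilon_\beta;\eta_\beta)$ being defined), whereas if $\eta>\pi\beta$ the equation $s^{1/\beta}=x$ can have a second root $|x|^{\beta}e^{i\beta(\arg x\mp 2\pi)}$ inside the sector, and the stated formula omits its residue.
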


\begin{lemma}\cite{Lavault}\label{l2.3}
    Let $0<\alpha,\beta<2$ and $\alpha\beta<2$. Let $\mu$ be any complex number and
$\tau$ be any real number satisfying the inequalities
\begin{equation}
\frac{\pi\alpha\beta}{2}
<
\tau
<
\min\!\bigl(\pi,\pi\alpha\beta\bigr).
\end{equation}

Then, for all integers $r_{\alpha},r_{\beta}\geq 1$, the function
$E_{\alpha,\beta}(x,y;\mu)$ verifies the following asymptotic formulas
drawn from its integral representations whenever $|x|\to\infty$ and
$|y|\to\infty$.
\begin{itemize}
    \item 
If  $|\arg x|\le \cfrac{\tau}{\beta}
$ and 
$|\arg y|\le \cfrac{\tau}{\alpha}$,
then
\[
\begin{aligned}
E_{\alpha,\beta}(x,y;\mu)
={}&
\frac{1}{\alpha}
\frac{e^{x^{1/\alpha}}
x^{\frac{1+\beta-\mu}{\alpha}}}
{x^{\beta/\alpha}-y}
+
\frac{1}{\beta}
\frac{e^{y^{1/\beta}}
y^{\frac{1+\alpha-\mu}{\beta}}}
{y^{\alpha/\beta}-x}
+
\sum_{n=1}^{r_{\beta}}
\sum_{m=1}^{r_{\alpha}}
\frac{x^{-n}y^{-m}}
{\Gamma(\mu-n\alpha-m\beta)}
\\[2mm]
&\quad
+\,o\!\left(|xy|^{-1}|x|^{-r_{\alpha}}\right)
+o\!\left(|xy|^{-1}|y|^{-r_{\beta}}\right).
\end{aligned}
\]

\item If 
$\cfrac{\tau}{\beta}\leq|\arg x|\le \pi$
and $\cfrac{\tau}{\alpha}\leq|\arg y|\le \pi,$ then
\[
E_{\alpha,\beta}(x,y;\mu)
=
\sum_{n=1}^{r_{\beta}}
\sum_{m=1}^{r_{\alpha}}
\frac{x^{-n}y^{-m}}
{\Gamma(\mu-n\alpha-m\beta)}
+
o\!\left(|xy|^{-1}|x|^{-r_{\alpha}}\right)
+
o\!\left(|xy|^{-1}|y|^{-r_{\beta}}\right).
\]
\end{itemize}

\end{lemma}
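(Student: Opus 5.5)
The plan is to derive the asymptotics in Lemma \ref{l2.3} from the two Hankel representations, Lemma \ref{l2.1} and Lemma \ref{l2.2}, taking $\eta=\tau$. The integral term is common to both representations, so the work is in expanding it asymptotically.

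\textbf{Choice of representation.} Fix $\tau$ with $\frac{\pi\alpha\beta}{2}<\tau<\min(\pi,\pi\alpha\beta)$. Since the inequality is strict, I can pick $\eta$ slightly larger (or smaller) than $\tau$ so that condition \eqref{2.1} still holds.
\begin{itemize}
\item If $|\arg x|\le \tau/\beta$ and $|\arg y|\le\tau/\alpha$, then for $|x|,|y|$ large the points lie in $\Omega^{(+)}(\varepsilon_\alpha;\eta_\alpha)$ and $\Omega^{(+)}(\varepsilon_\beta;\eta_\beta)$ with $\eta$ slightly above $\tau$. Lemma \ref{l2.2} then applies and produces the two exponential residue terms.
\item In the complementary sector, choose $\eta$ slightly below $\tau$ and let $|x|,|y|\to\infty$. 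Then $x,y$ lie in the $\Omega^{(-)}$ regions and Lemma \ref{l2.1} applies, so no exponential terms appear.
\end{itemize}
In both cases it remains to expand the common integral
\[
I(x,y)=\frac{1}{2\pi i\,\alpha\beta}\int_{\omega(\varepsilon;\eta)}e^{s^{1/(\alpha\beta)}}s^{\frac{\alpha+\beta+1-\mu}{\alpha\beta}-1}\frac{ds}{(s^{1/\alpha}-y)(s^{1/\beta}-x)}.
\]

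\textbf{Expansion of the kernel.} Use the finite geometric identities
\[
\frac{1}{s^{1/\beta}-x}=-\sum_{n=1}^{r_\beta}s^{(n-1)/\beta}x^{-n}+\frac{s^{r_\beta/\beta}x^{-r_\beta}}{s^{1/\beta}-x},
\]
and the analogue for $(s^{1/\alpha}-y)^{-1}$ with $r_\alpha$. Multiplying the two expansions gives three kinds of terms:
\begin{itemize}
\item a finite double sum of monomials $x^{-n}y^{-m}s^{(n-1)/\beta+(m-1)/\alpha}$;
\item two mixed terms, each with one polynomial factor and one remainder factor;
\item one product of the two remainders.
\end{itemize}

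\textbf{Evaluating the main terms.} For each monomial, substitute $u=s^{1/(\alpha\beta)}$, so $s=u^{\alpha\beta}$. The Hankel contour maps to a Hankel contour with angle $\eta/(\alpha\beta)\in(\pi/2,\pi)$, which is admissible for the Hankel formula
\[
\frac{1}{\Gamma(z)}=\frac{1}{2\pi i}\int e^{u}u^{-z}\,du .
\]
Collecting exponents, the $(n,m)$ term equals $x^{-n}y^{-m}/\Gamma(\mu-n\alpha-m\beta)$, with sign and normalization to be checked against the factor $1/(\alpha\beta)$ and the shift $\alpha+\beta+1$.

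\textbf{Remainder estimates (main obstacle).} On $\omega(\varepsilon;\eta)$, with $x,y$ at the prescribed angular distance from the image rays (guaranteed by choosing $\eta\neq\tau$), we have $|s^{1/\beta}-x|\ge c\,|x|$ and $|s^{1/\alpha}-y|\ge c\,|y|$ uniformly for large $|x|,|y|$. Since $e^{s^{1/(\alpha\beta)}}$ decays exponentially on the rays (because $\eta/(\alpha\beta)>\pi/2$), the remainder integrals converge absolutely. The remainder terms are then bounded as follows:
\begin{itemize}
\item the mixed term with the $x$-remainder is $O(|x|^{-r_\beta-1}|y|^{-1})$;
\item the mixed term with the $y$-remainder is $O(|y|^{-r_\alpha-1}|x|^{-1})$;
\item the product of remainders is smaller than both.
\end{itemize}
Taking one more term in each sum and absorbing it upgrades these bounds to the $o(\cdot)$ form of Lemma \ref{l2.3}, up to the indexing of $r_\alpha,r_\beta$ as stated.

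The delicate point is the uniform lower bound on the denominators near the boundary rays $|\arg x|=\tau/\beta$ and $|\arg y|=\tau/\alpha$. This is exactly why $\tau$ is kept strictly inside the interval and $\eta$ is perturbed away from it.
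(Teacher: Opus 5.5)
Your route is the paper's route. The paper only quotes this lemma from Lavault, but its proof of Theorem~\ref{thm4.1} is your argument in three variables:
\begin{itemize}
\item the Hankel angle $\theta$ is taken below $\tau$ (Lemma~\ref{l3.1}) or above $\tau$ (Lemma~\ref{l3.2}), the analogues of Lemmas~\ref{l2.1} and~\ref{l2.2};
\item each factor of the kernel is expanded with $\frac{1}{A-z}=-\sum_{k=1}^{r}\frac{A^{k-1}}{z^{k}}+\frac{A^{r}}{z^{r}(A-z)}$;
\item the monomials are evaluated by the Hankel formula for $1/\Gamma$; the sign is $(-1)^2=+1$ and the exponent is $\frac{1-(\mu-n\alpha-m\beta)}{\alpha\beta}-1$, so your unchecked normalization works out;
\item the remainders are controlled by a bound $|s^{1/\beta}-x|\ge c\,|x|$ from the angular gap $|\theta-\tau|/\beta$, together with $\cos(\theta/(\alpha\beta))<0$.
\end{itemize}
Up to that point your plan is sound, and what it proves is
\[
E_{\alpha,\beta}(x,y;\mu)=\mathcal{E}(x,y)+\sum_{n=1}^{r_\beta}\sum_{m=1}^{r_\alpha}\frac{x^{-n}y^{-m}}{\Gamma(\mu-n\alpha-m\beta)}+O\bigl(|xy|^{-1}|x|^{-r_\beta}\bigr)+O\bigl(|xy|^{-1}|y|^{-r_\alpha}\bigr).
\]
Here $\mathcal{E}$ is the pair of exponential terms in the first sector and $0$ in the second.

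\textbf{The step that fails is the final upgrade to $o(\cdot)$.} Expanding to $r_\beta+1$ and $r_\alpha+1$ terms does make the \emph{integral} remainders small enough. The new sum terms, however, cannot be absorbed: $x^{-(r_\beta+1)}y^{-1}/\Gamma(\mu-(r_\beta+1)\alpha-\beta)$ has modulus a fixed constant times $|xy|^{-1}|x|^{-r_\beta}$.

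No other argument rescues this, because the $o$-form is false in general. Write $\frac{1}{s^{1/\beta}-x}=-\frac{1}{x}+\frac{s^{1/\beta}}{x(s^{1/\beta}-x)}$ inside your $x$-remainder. Then the total remainder equals
\[
\sum_{m=1}^{r_\alpha}\frac{x^{-(r_\beta+1)}y^{-m}}{\Gamma(\mu-(r_\beta+1)\alpha-m\beta)}+O\bigl(|x|^{-r_\beta-2}|y|^{-1}\bigr)+O\bigl(|x|^{-1}|y|^{-r_\alpha-1}\bigr).
\]
Take any path in the sector with $|x|\to\infty$ and $|y|^{r_\alpha}/|x|^{r_\beta}\to\infty$. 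Along it the remainder is asymptotic to the $m=1$ term. That term is not $o\bigl(|xy|^{-1}|x|^{-r_\beta}\bigr)+o\bigl(|xy|^{-1}|y|^{-r_\alpha}\bigr)$ unless $1/\Gamma(\mu-(r_\beta+1)\alpha-\beta)=0$.

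So the correct conclusion is the $O$-form above, the two-variable analogue of the classical one-variable error $O(|z|^{-1-p})$. The paper's proof of Theorem~\ref{thm4.1} makes the same unjustified jump: it proves $I_1=O(\cdot)$ and then writes $o(\cdot)$ only because $I_2$ and $I_3$ are smaller.

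Two smaller points.
\begin{itemize}
\item Do not hedge on the indexing. The printed statement sums the $x$-index up to $r_\beta$ but puts $|x|^{-r_\alpha}$ in the error, which cannot hold when $r_\alpha\neq r_\beta$. Your pairing, with $r_\beta$ in both places, is the right one.
\item Your uniform lower bound on $|s^{1/\beta}-x|$ tacitly requires $\eta/\beta\le\pi$ and $\eta/\alpha\le\pi$. So does the very use of $\Omega^{(\pm)}(\varepsilon_\alpha;\eta_\alpha)$ in Lemmas~\ref{l2.1} and~\ref{l2.2}. The hypotheses do not force this: for $\alpha=3/2$, $\beta=1/2$, $\tau=3\pi/5$ one has $\tau/\beta>\pi$, and the image ray $\arg w=\eta/\beta$ wraps around into the region where $x$ is allowed. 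That restriction should be stated explicitly.
\end{itemize}
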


\section{Integral representations of three-variable Mittag-Leffler function}\label{sec3}

\begin{lemma}\label{l3.1}
     Let $0 < \alpha, \beta,\gamma < 2$ and $\alpha\beta\gamma < 2$. Let $\mu$ be any complex number and let $\eta$ satisfy the condition
\begin{equation}\label{3.1}
\frac{\pi\alpha\beta\gamma}{2}
<
\eta
\leq
\min\!\bigl(\pi,\pi\alpha\beta\gamma\bigr).
\end{equation}

If
$
x \in \Omega^{(-)}({\varepsilon}_\alpha;\eta_\alpha)
$,\,\, 
$y \in \Omega^{(-)}({\varepsilon}_\beta;\eta_\beta),
$ and $z \in \Omega^{(-)}({\varepsilon}_\gamma;\eta_\gamma)
$,
where
$
{\varepsilon}_\alpha:={\varepsilon}^{\frac{1}{\beta\gamma}},\,
{\varepsilon}_\beta:={\varepsilon}^{\frac{1}{\alpha\gamma}}, \,{\varepsilon}_\gamma:={\varepsilon}^{\frac{1}{\alpha\beta}},\,
\eta_\alpha:=\cfrac{{\eta}}{\beta\gamma},\,
\eta_\beta:=\cfrac{{\eta}}{\alpha\gamma},\,
\eta_\gamma:=\cfrac{{\eta}}{\alpha\beta},
$
then the following Hankel integral representation holds:
\begin{equation}\label{three-variable integral representation}
E_{\alpha,\beta,\gamma}(x,y,z;\mu)
=
\frac{1}{2\pi i}\,
\frac{1}{\alpha\beta\gamma}
\int_{\omega(\varepsilon;\eta)}
\frac{
e^{s^{1/(\alpha\beta\gamma)}}
\,s^{\frac{\alpha+\beta+\gamma+1-\mu}{\alpha\beta\gamma}-1}
}
{
(s^{1/\beta\gamma}-x)(s^{1/\alpha\gamma}-y)(s^{1/\alpha\beta}-z)
}
\,ds.
\end{equation}
\end{lemma}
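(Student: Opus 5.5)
The plan is the classical Dzhrbashyan–Podlubny argument used for $E_{\alpha,\beta}(z)$ and by Lavault for Lemma \ref{l2.1}: first prove \eqref{three-variable integral representation} for small $x,y,z$ by expanding the integrand into a geometric series and integrating term by term with Hankel's formula for $1/\Gamma$, then extend to all of $\Omega^{(-)}$ by analytic continuation.

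\emph{Step 1 (small arguments).} Assume $|x|<\varepsilon_\alpha$, $|y|<\varepsilon_\beta$, $|z|<\varepsilon_\gamma$. On $\omega(\varepsilon;\eta)$ we have $|s|\ge\varepsilon$, so $|s^{1/\beta\gamma}|\ge\varepsilon_\alpha>|x|$, and similarly for $y$ and $z$. Hence, uniformly on the contour,
\[
\frac{1}{(s^{1/\beta\gamma}-x)(s^{1/\alpha\gamma}-y)(s^{1/\alpha\beta}-z)}
=\sum_{n,m,p=0}^{\infty}x^ny^mz^p\,s^{-\frac{(n+1)\alpha+(m+1)\beta+(p+1)\gamma}{\alpha\beta\gamma}}.
\]
Multiplying by $s^{\frac{\alpha+\beta+\gamma+1-\mu}{\alpha\beta\gamma}-1}$, the general term contains $s^{\frac{1-\mu-N}{\alpha\beta\gamma}-1}$ with $N=n\alpha+m\beta+p\gamma$. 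The series is dominated by a convergent geometric series, and $e^{s^{1/(\alpha\beta\gamma)}}$ decays exponentially on the rays because $|\arg s^{1/(\alpha\beta\gamma)}|=\eta/(\alpha\beta\gamma)>\pi/2$ by \eqref{3.1}. Together these justify interchanging sum and integral.

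\emph{Step 2 (Hankel formula).} In each term substitute $t=s^{1/(\alpha\beta\gamma)}$, so that $ds=\alpha\beta\gamma\, t^{\alpha\beta\gamma-1}dt$. The term becomes
\[
\frac{1}{2\pi i}\int_{\omega(\varepsilon^{1/(\alpha\beta\gamma)};\,\eta/(\alpha\beta\gamma))}e^{t}t^{-(\mu+N)}\,dt .
\]
The factor $\alpha\beta\gamma$ cancels the prefactor. Since $\pi/2<\eta/(\alpha\beta\gamma)\le\pi$ (again by \eqref{3.1}), this is Hankel's representation of $1/\Gamma(\mu+N)$. Summing over $n,m,p$ gives the series defining $E_{\alpha,\beta,\gamma}(x,y,z;\mu)$. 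This proves \eqref{three-variable integral representation} on the polydisc.

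\emph{Step 3 (continuation), the main obstacle.} $E_{\alpha,\beta,\gamma}$ is entire in $(x,y,z)$. The right-hand side is holomorphic in each variable on $\Omega^{(-)}(\varepsilon_\alpha;\eta_\alpha)\times\Omega^{(-)}(\varepsilon_\beta;\eta_\beta)\times\Omega^{(-)}(\varepsilon_\gamma;\eta_\gamma)$, by differentiation under the integral sign using the exponential decay on the rays. The delicate point is that the denominators do not vanish there. As $s$ runs over $\omega(\varepsilon;\eta)$, the point $s^{1/\beta\gamma}$ runs exactly over $\omega(\varepsilon_\alpha;\eta_\alpha)$, the boundary of the region in which $x$ lives, so $x\neq s^{1/\beta\gamma}$; the same holds for $y$ and $z$. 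Care is needed when $\eta_\alpha=\eta/(\beta\gamma)$ exceeds $\pi$. Then the image is a path on the Riemann surface of the logarithm, and I would interpret $\Omega^{(-)}$ via principal branches, exactly as in Lavault's treatment of Lemma \ref{l2.1}. Each $\Omega^{(-)}$ is connected and contains the small disc from Step 1. Applying the identity theorem separately in $x$, then $y$, then $z$, with the other variables fixed, extends the equality to the whole product domain.
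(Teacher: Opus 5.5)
Your argument is correct, but it is organized differently from the paper's.

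\textbf{How the paper argues.} It does not expand all three factors. It writes $E_{\alpha,\beta,\gamma}(x,y,z;\mu)=\sum_{n,m\ge 0}x^ny^mE_\gamma(z;n\alpha+m\beta+\mu)$ and inserts the known Hankel representation of the two-parameter function $E_\gamma(z;\cdot)$ on $\omega(\varepsilon_\gamma;\eta_\gamma)$. That representation already holds for every $z\in\Omega^{(-)}(\varepsilon_\gamma;\eta_\gamma)$. The paper then sums geometric series only in $xs^{-\alpha/\gamma}$ and $ys^{-\beta/\gamma}$, so it assumes only $|x|<\varepsilon_\alpha$ and $|y|<\varepsilon_\beta$. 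This gives the intermediate formula \eqref{1.3}, after which it substitutes $s=\zeta^{1/(\alpha\beta)}$ and continues analytically.

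\textbf{How your route differs.} You expand in all three variables and use nothing but Hankel's formula for $1/\Gamma$ after $t=s^{1/(\alpha\beta\gamma)}$.

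\textbf{What each buys.} The paper's route is shorter and needs continuation only in $x$ and $y$. It also produces \eqref{1.3}, which the paper reuses in the proof of Lemma~\ref{l3.2} (see \eqref{3.10}). Yours is self-contained and uses exactly what \eqref{3.1} supplies, namely $\pi/2<\eta/(\alpha\beta\gamma)\le\pi$. By contrast, quoting the one-variable theorem on $\omega(\varepsilon_\gamma;\eta_\gamma)$ requires $\eta_\gamma=\eta/(\alpha\beta)\le\pi$. This does not follow from \eqref{3.1} when $\alpha\beta<1$: take $\alpha=\beta=1/2$, $\gamma=3/2$ and $\eta$ near $3\pi/8$, so that $\eta_\gamma$ is near $3\pi/2$. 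Your core computation therefore holds in the full parameter range. Your variable-by-variable use of the identity theorem is also more explicit than the paper's one-line appeal to analytic continuation.

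\textbf{On your caveat about $\eta_\alpha>\pi$.} The caveat is justified, and the paper ignores it. However, appealing to ``principal branches, as in Lavault'' is vaguer than necessary. In that regime the image of the arc $C_\eta(0;\varepsilon)$ under $s\mapsto s^{1/\beta\gamma}$ covers the entire circle $|w|=\varepsilon_\alpha$. Hence the component of the complement of the image curve containing the small disc is the disc itself, and continuation gives nothing beyond $|x|<\varepsilon_\alpha$. The statement then only makes sense with $\Omega^{(-)}(\varepsilon_\alpha;\eta_\alpha)$ read as that disc, and there your Steps 1--2 already suffice. The same applies to $\eta_\beta$ and $\eta_\gamma$.
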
\begin{proof}
We rewrite the three-variable Mittag--Leffler function in the following form:
\begin{equation}\label{1.1}
\begin{aligned}
&E_{\alpha,\beta,\gamma}(x,y,z;\mu)
=
\sum_{n=0}^{\infty}
\sum_{m=0}^{\infty}\sum_{p=0}^{\infty}
\frac{x^n y^m z^p}
{\Gamma(n\alpha+m\beta+p\gamma+\mu)}\\
&=
\sum_{n=0}^{\infty}
\sum_{m=0}^{\infty}x^ny^m
\sum_{p=0}^{\infty}
\frac{z^p }
{\Gamma\bigl(p\gamma+(n\alpha+m\beta+\mu)\bigr)}=
\sum_{n=0}^{\infty}
\sum_{m=0}^{\infty}x^ny^m
E_{\gamma}(z;n\alpha+m\beta+\mu),
\end{aligned}\end{equation}
 where 
 $$E_{\gamma}(z;\mu)=\sum_{p=0}^{\infty}\frac{z^p}{\Gamma(p\gamma+\mu)},\quad \gamma>0,\quad\mu\in \mathbb{C},$$
 two-parametric Mittag-Leffler function (see \cite{Podlubny}, p. 17).
 
Under the assumptions of Lemma \ref{l3.1}, one can employ the known integral representation of
$E_{\gamma}(z;\alpha n+\beta m+\mu)$ , taking the above
$\varepsilon_{\gamma}$ and $\eta_{\gamma}$ as the parameters defining the Hankel contour.
This choice is admissible in view of inequalities \eqref{3.1}. Consequently, for
$
z\in\Omega^{(-)}(\varepsilon_{\gamma};\eta_{\gamma}),
$
and provided that
$
\eta_{\gamma}=\cfrac{\eta}{\alpha\beta},
$
the following representation follows from the integral representation of
$E_{\gamma}(z;\alpha n+\beta m+\mu)$ (see \cite{Podlubny}, p. 30):
\begin{equation}\label{two parametic integral representation}
E_{\gamma}(z;n\alpha+m\beta+\mu)
=
\frac{1}{2\pi i}\,
\frac{1}{\gamma}
\int_{\omega(\varepsilon_\gamma;\eta_\gamma)}
\frac{
e^{s^{1/\gamma}}
\,s^{\frac{1-\alpha n-\beta m-\mu}{\gamma}}
}
{
(s-z)
}
\,ds.
\end{equation}

Let $|x|<\varepsilon_{\alpha}$ and $|y|<\varepsilon_{\beta}$. Taking into account the fact that $\varepsilon_\alpha=\varepsilon^{1/\beta\gamma}=\left(\varepsilon_{\gamma}^{\alpha\beta}\right)^{1/\beta\gamma}=\varepsilon_{\gamma}^{\alpha/\gamma}$ and $\varepsilon_\beta=\varepsilon^{1/\alpha\gamma}=\left(\varepsilon_{\gamma}^{\alpha\beta}\right)^{1/\alpha\gamma}=\varepsilon_{\gamma}^{\beta/\gamma}$ yields next the inequalities
\begin{equation}\label{3.4}
\sup_{s\in\omega(\varepsilon_{\gamma};\eta_{\gamma})}
\left|xs^{-\alpha/\gamma}\right|<1,
\end{equation}
 \begin{equation}\label{3.5}
\sup_{s\in\omega(\varepsilon_{\gamma};\eta_{\gamma})}
\left|ys^{-\beta/\gamma}\right|<1.
\end{equation}

Using \eqref{two parametic integral representation}, we rewrite \eqref{1.1} in the following form:
\begin{equation}\label{1.2}
\begin{aligned}
&E_{\alpha,\beta,\gamma}(x,y,z;\mu)
=\frac{1}{2\pi i}\,
\frac{1}{\gamma}
\int_{\omega(\varepsilon_\gamma;\eta_\gamma)}
\frac{
e^{s^{1/\gamma}}
\,s^{\frac{1-\mu}{\gamma}}
}
{
(s-z)
}\left(\sum_{n=0}^{\infty}\left(xs^{-\alpha/\gamma}\right)^n\sum_{m=0}^{\infty}\left(ys^{-\beta/\gamma}\right)^m\right)
\,ds.
\end{aligned}\end{equation}

Simplifying and using \eqref{3.4}, \eqref{3.5} inequalities, we rewrite \eqref{1.2} in the following form:

\begin{equation}\label{1.3}
\begin{aligned}
E_{\alpha,\beta,\gamma}(x,y,z;\mu)
&=
\frac{1}{2\pi i}\,
\frac{1}{\gamma}
\int_{\omega(\varepsilon_\gamma;\eta_\gamma)}
\frac{
e^{s^{1/\gamma}}
\,s^{\frac{\alpha+\beta+1-\mu}{\gamma}}
}
{
(s^{\alpha/\gamma}-x)(s^{\beta/\gamma}-y)(s-z)
}
\,ds.
\end{aligned}\end{equation}
We introduce the notation
$s=\zeta^{\frac{1}{\alpha\beta}}$, $ds=\cfrac{1}{\alpha\beta}\zeta^{\frac{1-\alpha\beta}{\alpha\beta}}d\zeta$, we get
\begin{equation}\label{1.4}
E_{\alpha,\beta,\gamma}(x,y,z;\mu)
=
\frac{1}{2\pi i}\,
\frac{1}{\alpha\beta\gamma}
\int_{\omega(\varepsilon;\eta)}
\frac{
e^{\zeta^{1/(\alpha\beta\gamma)}}
\,\zeta^{\frac{\alpha+\beta+\gamma+1-\mu}{\alpha\beta\gamma}-1}
}
{
(\zeta^{1/\gamma\beta}-x)(\zeta^{1/\gamma\alpha}-y)(\zeta^{1/\alpha\beta}-z)
}
\,d\zeta.\end{equation}

The resulting integral is absolutely convergent and defines an analytic function of
\[
x\in\Omega^{(-)}(\varepsilon_\alpha;\eta_\alpha),\qquad
y\in\Omega^{(-)}(\varepsilon_\beta;\eta_\beta),\qquad
z\in\Omega^{(-)}(\varepsilon_\gamma;\eta_\gamma).
\]
Moreover,
$
|x|<\varepsilon_\alpha,\,
|y|<\varepsilon_\beta,\,
|z|<\varepsilon_\gamma
$
imply that
\[
x\in\Omega^{(-)}(\varepsilon_\alpha;\eta_\alpha),\qquad
y\in\Omega^{(-)}(\varepsilon_\beta;\eta_\beta),\qquad
z\in\Omega^{(-)}(\varepsilon_\gamma;\eta_\gamma).
\]
Therefore, by the principle of analytic continuation, the obtained integral representation remains valid throughout
\[
\Omega^{(-)}(\varepsilon_\alpha;\eta_\alpha)
\times
\Omega^{(-)}(\varepsilon_\beta;\eta_\beta)
\times
\Omega^{(-)}(\varepsilon_\gamma;\eta_\gamma),
\]
which completes the proof. \qed
\end{proof}

\begin{lemma}\label{l3.2}
     Let $0 < \alpha, \beta,\gamma < 2$ and $\alpha\beta\gamma < 2$. Let $\mu$ be any complex number and let $\eta$ satisfy the condition \eqref{3.1}.
If
$
x \in \Omega^{(+)}({\varepsilon}_\alpha;\eta_\alpha)
$,\,\, 
$y \in \Omega^{(+)}({\varepsilon}_\beta;\eta_\beta),
$ and $z \in \Omega^{(+)}({\varepsilon}_\gamma;\eta_\gamma)
$,
where
$
{\varepsilon}_\alpha:={\varepsilon}^{\frac{1}{\beta\gamma}},\,
{\varepsilon}_\beta:={\varepsilon}^{\frac{1}{\alpha\gamma}}, \,{\varepsilon}_\gamma:={\varepsilon}^{\frac{1}{\alpha\beta}},\,
\eta_\alpha:=\cfrac{{\eta}}{\beta\gamma},\,
\eta_\beta:=\cfrac{{\eta}}{\alpha\gamma},\,
\eta_\gamma:=\cfrac{{\eta}}{\alpha\beta},
$
then the following Hankel integral representation holds:
\begin{equation}\label{3.9}\begin{aligned}
&E_{\alpha,\beta,\gamma}(x,y,z;\mu)\\
&=\frac{1}{\alpha}\frac{
e^{x^{1/\alpha}}
\,x^{\frac{\beta+\gamma+1-\mu}{\alpha}}}
{(x^{\beta/\alpha}-y)(x^{\gamma/\alpha}-z)}+\frac{1}{\beta}\frac{
e^{y^{1/\beta}}
\,y^{\frac{\alpha+\gamma+1-\mu}{\beta}}}
{(y^{\alpha/\beta}-x)(y^{\gamma/\beta}-z)}+\frac{1}{\gamma}
\frac{
e^{z^{1/\gamma}}
\,z^{\frac{\alpha+\beta+1-\mu}{\gamma}}}
{(z^{\alpha/\gamma}-x)(z^{\beta/\gamma}-y)}\\
&+
\frac{1}{2\pi i}\,
\frac{1}{\alpha\beta\gamma}
\int_{\omega(\varepsilon;\eta)}
\frac{
e^{s^{1/(\alpha\beta\gamma)}}
\,s^{\frac{\alpha+\beta+\gamma+1-\mu}{\alpha\beta\gamma}-1}
}
{
(s^{1/\beta\gamma}-x)(s^{1/\alpha\gamma}-y)(s^{1/\alpha\beta}-z)
}
\,ds.\end{aligned}
\end{equation}
\end{lemma}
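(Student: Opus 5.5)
Starting from the representation of Lemma \ref{l3.1}, I will move the contour and pick up residues. Fix $x,y,z$ in the stated $\Omega^{(+)}$ regions. Choose $\varepsilon'>0$ large and $\eta$ unchanged so that the corresponding points lie in $\Omega^{(-)}(\varepsilon'_\alpha;\eta_\alpha)$, $\Omega^{(-)}(\varepsilon'_\beta;\eta_\beta)$, $\Omega^{(-)}(\varepsilon'_\gamma;\eta_\gamma)$. For instance, take $\varepsilon'$ with $\varepsilon'^{1/\beta\gamma}>|x|$, $\varepsilon'^{1/\alpha\gamma}>|y|$, $\varepsilon'^{1/\alpha\beta}>|z|$. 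Lemma \ref{l3.1} then expresses $E_{\alpha,\beta,\gamma}(x,y,z;\mu)$ as the integral over $\omega(\varepsilon';\eta)$.

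The difference between the integrals over $\omega(\varepsilon';\eta)$ and $\omega(\varepsilon;\eta)$ is the integral over the closed contour bounding the region between them. That region is the truncated sector $\{\varepsilon<|s|<\varepsilon',\ |\arg s|<\eta\}$, since the ray parts coincide beyond $\varepsilon'$. The integrand is meromorphic there on the principal branch, and its only possible poles are at the points where $s^{1/\beta\gamma}=x$, $s^{1/\alpha\gamma}=y$ or $s^{1/\alpha\beta}=z$. The hypothesis $x\in\Omega^{(+)}(\varepsilon_\alpha;\eta_\alpha)$ with $|\arg x|<\eta/(\beta\gamma)$ means exactly that $s_x:=x^{\beta\gamma}$ lies in that sector, and it is the unique solution there. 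The same holds for $s_y=y^{\alpha\gamma}$ and $s_z=z^{\alpha\beta}$. I will assume these three points are distinct, i.e. $x^{\beta/\alpha}\ne y$, etc., which is needed for the formula to make sense. Each pole is then simple.

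By the residue theorem,
\[E=\frac{1}{2\pi i}\frac{1}{\alpha\beta\gamma}\int_{\omega(\varepsilon;\eta)}(\cdots)\,ds+\sum_{s_*\in\{s_x,s_y,s_z\}}\operatorname{Res}_{s=s_*}(\cdots).\]
The sign is positive because $\omega(\varepsilon';\eta)$ minus $\omega(\varepsilon;\eta)$ is a positively oriented loop around the region, given the orientation by non-decreasing $\arg s$. At $s_x$ the derivative of $s^{1/\beta\gamma}-x$ is $\frac{1}{\beta\gamma}s^{1/\beta\gamma-1}$, and $s_x^{1/\beta\gamma}=x$. The residue is therefore
\[\frac{1}{\alpha}\,e^{x^{1/\alpha}}\,s_x^{\frac{\alpha+\beta+\gamma+1-\mu}{\alpha\beta\gamma}-\frac{1}{\beta\gamma}}\big/\big((s_x^{1/\alpha\gamma}-y)(s_x^{1/\alpha\beta}-z)\big).\]
Substituting $s_x=x^{\beta\gamma}$ gives $s_x^{1/\alpha\gamma}=x^{\beta/\alpha}$ and $s_x^{1/\alpha\beta}=x^{\gamma/\alpha}$. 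The exponent becomes $\frac{\beta+\gamma+1-\mu}{\alpha}$, and $e^{s_x^{1/\alpha\beta\gamma}}=e^{x^{1/\alpha}}$. This reproduces the first term of \eqref{3.9}, and the other two terms follow by symmetry.

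The main obstacle is the branch bookkeeping. One must check that $(x^{\beta\gamma})^{1/\alpha\gamma}=x^{\beta/\alpha}$ holds with principal branches. This requires $|\arg x|\beta\gamma<\eta\le\pi$, which is guaranteed by \eqref{3.1} together with the sector condition. One must also check that no other roots of $s^{1/\beta\gamma}=x$ lie on the principal sheet inside $|\arg s|<\eta$. Finally, the case where $x$ lies inside the disk but outside the sector has to be excluded, and the definition of $\Omega^{(+)}$ does exactly this.
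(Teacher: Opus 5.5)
Your argument is correct (modulo the caveat below), but it is organized differently from the paper's. You enlarge to $\omega(\varepsilon';\eta)$ with all three points to the left, apply Lemma \ref{l3.1} there, and shrink back in a single step. This collects the three simple poles $x^{\beta\gamma}$, $y^{\alpha\gamma}$, $z^{\alpha\beta}$ at once via $\operatorname{Res}=g/h'$, and your orientation argument and residue computation check out.

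The paper instead treats one variable at a time. It deforms in the variable of \eqref{1.3}, where $z$ enters linearly, to get \eqref{3.12}; then it substitutes $t=s^{1/\alpha\gamma}$ so that $y$ enters linearly, and finally $t=s^{1/\beta\gamma}$ for $x$. That route only ever needs residues at linear poles and produces the mixed cases of the Remarks along the way. Its bookkeeping is delicate, though: in passing to \eqref{3.14} it asserts $z\in\Omega^{(-)}(\widetilde{\varepsilon}_{\gamma};\eta_\gamma)$ yet keeps the $z$-residue, and \eqref{3.15} then collects only the $y$-residue. That step is not internally consistent, even though the final formula is right.

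Your one-shot deformation has three advantages. It does not depend on how $|y|^{\alpha\gamma}$ and $|z|^{\alpha\beta}$ compare. It gives the mixed cases equally easily: keep only the poles lying in the truncated sector. And it makes the needed non-degeneracy $x^{\beta/\alpha}\neq y$, etc., explicit.

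The caveat concerns the uniqueness you assert and then defer ("one must also check"). The roots of $s^{1/\beta\gamma}=x$ in $\{|\arg s|<\eta\}$ are $|x|^{\beta\gamma}e^{i\beta\gamma(\arg x+2\pi k)}$ with $|\arg x+2\pi k|<\eta_\alpha$. So $x^{\beta\gamma}$ is the only root for every admissible $x$ exactly when $\eta_\alpha\le\pi$, i.e.\ $\eta\le\pi\beta\gamma$, and likewise $\eta\le\pi\alpha\gamma$ and $\eta\le\pi\alpha\beta$ for $y$ and $z$.

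Condition \eqref{3.1} does not imply this. Take $\alpha=1.9$, $\beta=\gamma=\frac12$, $\pi/4<\eta\le 0.475\pi$ and $x<0$. Then the equation $s^4=x$ has the two roots $|x|^{1/4}e^{\pm i\pi/4}$ in the sector, producing a second $x$-residue that \eqref{3.9} does not contain.

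The restriction $\eta_\alpha,\eta_\beta,\eta_\gamma\le\pi$ is implicit in the statement, since $\Omega^{(\pm)}$ is only defined for opening angles at most $\pi$. The paper's proof relies on it tacitly too. State it as a hypothesis; injectivity of $s\mapsto s^{1/\beta\gamma}$ on the sector then closes the point.
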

\begin{proof}
   First, suppose that only the point $z$ lies to the right of the Hankel contour
$\omega(\varepsilon_\gamma;\eta_\gamma)$, namely,
$
z\in\Omega^{(+)}(\varepsilon_\gamma;\eta_\gamma).
$
Choose an arbitrary number $\widetilde{\varepsilon}_{\gamma}$ satisfying
$\widetilde{\varepsilon}_{\gamma}>|z|$. Then
$
z\in\Omega^{(-)}(\widetilde{\varepsilon}_{\gamma};\eta_\gamma).
$
Furthermore, setting
$
\widetilde{\varepsilon}_{\alpha}
=\widetilde{\varepsilon}_{\gamma}^{\frac{\alpha}{\gamma}},
\,\,
\widetilde{\varepsilon}_{\beta}
=\widetilde{\varepsilon}_{\gamma}^{\frac{\beta}{\gamma}},
$
it follows that
$
x\in\Omega^{(-)}(\widetilde{\varepsilon}_{\alpha};\eta_\alpha),
\,\,
y\in\Omega^{(-)}(\widetilde{\varepsilon}_{\beta};\eta_\beta).
$
Hence, applying the integral representation \eqref{1.3}, we arrive at
\begin{equation}\label{3.10}E_{\alpha,\beta,\gamma}(x,y,z;\mu)
=
\frac{1}{2\pi i}\,
\frac{1}{\gamma}
\int_{\omega(\widetilde{\varepsilon}_{\gamma};\eta_\gamma)}
\frac{
e^{s^{1/\gamma}}
\,s^{\frac{\alpha+\beta+1-\mu}{\gamma}}
}
{
(s^{\alpha/\gamma}-x)(s^{\beta/\gamma}-y)(s-z)
}
\,ds.\end{equation}

Moreover, if
$
\varepsilon_\gamma<|z|<\widetilde{\varepsilon}_{\gamma},
$
then
$
|\arg z|<\eta_\gamma,
$
and, by Cauchy's theorem,
\begin{equation}\label{3.11}\begin{aligned}
E_{\alpha,\beta,\gamma}(x,y,z;\mu)
&=
\frac{1}{2\pi i}\,
\frac{1}{\gamma}
\int_{\omega(\widetilde{\varepsilon}_{\gamma};\eta_\gamma)
-
\omega(\varepsilon_\gamma;\eta_\gamma)}
\frac{
e^{s^{1/\gamma}}
\,s^{\frac{\alpha+\beta+1-\mu}{\gamma}}
}
{
(s^{\alpha/\gamma}-x)(s^{\beta/\gamma}-y)(s-z)
}ds\\
&=\frac{1}{\gamma}
\frac{
e^{z^{1/\gamma}}
\,z^{\frac{\alpha+\beta+1-\mu}{\gamma}}}
{(z^{\alpha/\gamma}-x)(z^{\beta/\gamma}-y)}\end{aligned}
\end{equation}

Therefore, from \eqref{3.10} and \eqref{3.11}, we obtain the following integral representation 
\begin{equation}\label{3.12}\begin{aligned}
E_{\alpha,\beta,\gamma}(x,y,z;\mu)
&=\frac{1}{\gamma}\frac{
e^{z^{1/\gamma}}
\,z^{\frac{\alpha+\beta+1-\mu}{\gamma}}}
{(z^{\alpha/\gamma}-x)(z^{\beta/\gamma}-y)}\\
&+
\frac{1}{2\pi i}\,
\frac{1}{\alpha\beta\gamma}
\int_{\omega(\varepsilon;\eta)}
\frac{
e^{s^{1/(\alpha\beta\gamma)}}
\,s^{\frac{\alpha+\beta+\gamma+1-\mu}{\alpha\beta\gamma}-1}
}
{
(s^{1/\beta\gamma}-x)(s^{1/\alpha\gamma}-y)(s^{1/\alpha\beta}-z)
}
\,ds.\end{aligned}\end{equation}

\begin{remark} Analogously, for  $x\in\Omega^{(+)}(\varepsilon_{\alpha};\eta_\alpha),$
\,
$y\in\Omega^{(-)}(\varepsilon_{\beta};\eta_\beta)$,\, $z\in\Omega^{(-)}(\varepsilon_{\gamma};\eta_\gamma)$ or $x\in\Omega^{(-)}(\varepsilon_{\alpha};\eta_\alpha),$
\,
$y\in\Omega^{(+)}(\varepsilon_{\beta};\eta_\beta)$,\, $z\in\Omega^{(-)}(\varepsilon_{\gamma};\eta_\gamma)$
we obtain the following integral representations, respectively 
$$\begin{aligned}
E_{\alpha,\beta,\gamma}(x,y,z;\mu)
&=\frac{1}{\alpha}\frac{
e^{x^{1/\alpha}}
\,x^{\frac{\beta+\gamma+1-\mu}{\alpha}}}
{(x^{\beta/\alpha}-y)(x^{\gamma/\alpha}-z)}\\
&+
\frac{1}{2\pi i}\,
\frac{1}{\alpha\beta\gamma}
\int_{\omega(\varepsilon;\eta)}
\frac{
e^{s^{1/(\alpha\beta\gamma)}}
\,s^{\frac{\alpha+\beta+\gamma+1-\mu}{\alpha\beta\gamma}-1}
}
{
(s^{1/\beta\gamma}-x)(s^{1/\alpha\gamma}-y)(s^{1/\alpha\beta}-z)
}
\,ds,\end{aligned}
$$
and 
$$\begin{aligned}
E_{\alpha,\beta,\gamma}(x,y,z;\mu)
&=\frac{1}{\beta}\frac{
e^{y^{1/\beta}}
\,y^{\frac{\alpha+\gamma+1-\mu}{\beta}}}
{(y^{\alpha/\beta}-x)(y^{\gamma/\beta}-z)}\\
&+
\frac{1}{2\pi i}\,
\frac{1}{\alpha\beta\gamma}
\int_{\omega(\varepsilon;\eta)}
\frac{
e^{s^{1/(\alpha\beta\gamma)}}
\,s^{\frac{\alpha+\beta+\gamma+1-\mu}{\alpha\beta\gamma}-1}
}
{
(s^{1/\beta\gamma}-x)(s^{1/\alpha\gamma}-y)(s^{1/\alpha\beta}-z)
}
\,ds.\end{aligned}
$$\end{remark}

Second, suppose that both $y$ and $z$ lie to the right of the corresponding Hankel contours, that is,
$
y\in\Omega^{(+)}(\varepsilon_\beta;\eta_\beta),
\,\,
z\in\Omega^{(+)}(\varepsilon_\gamma;\eta_\gamma).
$
From \eqref{3.12}, it remains to analyze the remaining contour integral. Applying the change of variables
$
s^{1/\alpha\gamma}=t,
$
we obtain
\begin{equation}\label{3.13}\begin{aligned}
E_{\alpha,\beta,\gamma}(x,y,z;\mu)
&=\frac{1}{\gamma}\frac{
e^{z^{1/\gamma}}
\,z^{\frac{\alpha+\beta+1-\mu}{\gamma}}}
{(z^{\alpha/\gamma}-x)(z^{\beta/\gamma}-y)}\\
&+
\frac{1}{2\pi i}\,
\frac{1}{\beta}
\int_{\omega(\varepsilon_\beta;\eta_\beta)}
\frac{
e^{t^{1/\beta}}
\,t^{\frac{\alpha+\gamma+1-\mu}{\beta}}
}
{
(t^{\alpha/\beta}-x)(t-y)(t^{\gamma/\beta}-z)
}
\,dt.\end{aligned}\end{equation}
Under the transformation
$
s=t^{\alpha\gamma},
$
the Hankel contour
$
\omega(\varepsilon;\eta)
$
is mapped onto the Hankel contour
$
\omega(\varepsilon_\beta;\eta_\beta).
$ Choose an arbitrary number $\widetilde{\varepsilon}_{\beta}$ satisfying
$\widetilde{\varepsilon}_{\beta}>|y|$. Then
$
y\in\Omega^{(-)}(\widetilde{\varepsilon}_{\beta};\eta_\beta).
$
Furthermore, setting
$
\widetilde{\varepsilon}_{\alpha}
=\widetilde{\varepsilon}_{\beta}^{\frac{\alpha}{\beta}},
\,\,
\widetilde{\varepsilon}_{\gamma}
=\widetilde{\varepsilon}_{\beta}^{\frac{\gamma}{\beta}},
$
it follows that
$
x\in\Omega^{(-)}(\widetilde{\varepsilon}_{\alpha};\eta_\alpha),
\,\,
z\in\Omega^{(-)}(\widetilde{\varepsilon}_{\gamma};\eta_\gamma).
$ Hence, applying the integral representation \eqref{3.13}, we arrive at
\begin{equation}\label{3.14}\begin{aligned}
E_{\alpha,\beta,\gamma}(x,y,z;\mu)
&=\frac{1}{\gamma}\frac{
e^{z^{1/\gamma}}
\,z^{\frac{\alpha+\beta+1-\mu}{\gamma}}}
{(z^{\alpha/\gamma}-x)(z^{\beta/\gamma}-y)}\\
&+
\frac{1}{2\pi i}\,
\frac{1}{\beta}
\int_{\omega(\widetilde{\varepsilon}_{\beta};\eta_\beta)}
\frac{
e^{t^{1/\beta}}
\,t^{\frac{\alpha+\gamma+1-\mu}{\beta}}
}
{
(t^{\alpha/\beta}-x)(t-y)(t^{\gamma/\beta}-z)
}
\,dt.\end{aligned}\end{equation}
Moreover, if
$
\varepsilon_\beta<|y|<\widetilde{\varepsilon}_{\beta},
$
then
$
|\arg y|<\eta_\beta,
$
and, by Cauchy's theorem,
\begin{equation}\label{3.15}\begin{aligned}
E_{\alpha,\beta,\gamma}(x,y,z;\mu)
&=
\frac{1}{2\pi i}\,
\frac{1}{\beta}
\int_{\omega(\widetilde{\varepsilon}_{\beta};\eta_\beta)-\omega(\varepsilon_{\beta};\eta_\beta)}
\frac{
e^{t^{1/\beta}}
\,t^{\frac{\alpha+\gamma+1-\mu}{\beta}}
}
{
(t^{\alpha/\beta}-x)(t-y)(t^{\gamma/\beta}-z)
}
\,dt\\
&=\frac{1}{\beta}\frac{
e^{y^{1/\beta}}
\,y^{\frac{\alpha+\gamma+1-\mu}{\beta}}}
{(y^{\alpha/\beta}-x)(y^{\gamma/\beta}-z)}\end{aligned}
\end{equation}
Combining  \eqref{3.15}, \eqref{3.14}, \eqref{3.13} and \eqref{3.12}, we obtain the following integral representation 
\begin{equation}\label{3.16}\begin{aligned}
E_{\alpha,\beta,\gamma}(x,y,z;\mu)
&=\frac{1}{\gamma}\frac{
e^{z^{1/\gamma}}
\,z^{\frac{\alpha+\beta+1-\mu}{\gamma}}}
{(z^{\alpha/\gamma}-x)(z^{\beta/\gamma}-y)}+\frac{1}{\beta}\frac{
e^{y^{1/\beta}}
\,y^{\frac{\alpha+\gamma+1-\mu}{\beta}}}
{(y^{\alpha/\beta}-x)(y^{\gamma/\beta}-z)}\\
&+
\frac{1}{2\pi i}\,
\frac{1}{\alpha\beta\gamma}
\int_{\omega(\varepsilon;\eta)}
\frac{
e^{s^{1/(\alpha\beta\gamma)}}
\,s^{\frac{\alpha+\beta+\gamma+1-\mu}{\alpha\beta\gamma}-1}
}
{
(s^{1/\beta\gamma}-x)(s^{1/\alpha\gamma}-y)(s^{1/\alpha\beta}-z)
}
\,ds.\end{aligned}\end{equation}

\begin{remark} Analogously, for  $x\in\Omega^{(+)}(\varepsilon_{\alpha};\eta_\alpha),$
\,
$y\in\Omega^{(+)}(\varepsilon_{\beta};\eta_\beta)$,\, $z\in\Omega^{(-)}(\varepsilon_{\gamma};\eta_\gamma)$ or $x\in\Omega^{(+)}(\varepsilon_{\alpha};\eta_\alpha),$
\,
$y\in\Omega^{(-)}(\varepsilon_{\beta};\eta_\beta)$,\, $z\in\Omega^{(+)}(\varepsilon_{\gamma};\eta_\gamma)$
we obtain the following integral representations, respectively 
$$\begin{aligned}
E_{\alpha,\beta,\gamma}(x,y,z;\mu)
&=\frac{1}{\alpha}\frac{
e^{x^{1/\alpha}}
\,x^{\frac{\beta+\gamma+1-\mu}{\alpha}}}
{(x^{\beta/\alpha}-y)(x^{\gamma/\alpha}-z)}+\frac{1}{\beta}\frac{
e^{y^{1/\beta}}
\,y^{\frac{\alpha+\gamma+1-\mu}{\beta}}}
{(y^{\alpha/\beta}-x)(y^{\gamma/\beta}-z)}\\
&+
\frac{1}{2\pi i}\,
\frac{1}{\alpha\beta\gamma}
\int_{\omega(\varepsilon;\eta)}
\frac{
e^{s^{1/(\alpha\beta\gamma)}}
\,s^{\frac{\alpha+\beta+\gamma+1-\mu}{\alpha\beta\gamma}-1}
}
{
(s^{1/\beta\gamma}-x)(s^{1/\alpha\gamma}-y)(s^{1/\alpha\beta}-z)
}
\,ds,\end{aligned}
$$
and 
$$\begin{aligned}
E_{\alpha,\beta,\gamma}(x,y,z;\mu)
&=\frac{1}{\alpha}\frac{
e^{x^{1/\alpha}}
\,x^{\frac{\beta+\gamma+1-\mu}{\alpha}}}
{(x^{\beta/\alpha}-y)(x^{\gamma/\alpha}-z)}+\frac{1}{\gamma}\frac{
e^{z^{1/\gamma}}
\,z^{\frac{\alpha+\beta+1-\mu}{\gamma}}}
{(z^{\alpha/\gamma}-x)(z^{\beta/\gamma}-y)}\\
&+
\frac{1}{2\pi i}\,
\frac{1}{\alpha\beta\gamma}
\int_{\omega(\varepsilon;\eta)}
\frac{
e^{s^{1/(\alpha\beta\gamma)}}
\,s^{\frac{\alpha+\beta+\gamma+1-\mu}{\alpha\beta\gamma}-1}
}
{
(s^{1/\beta\gamma}-x)(s^{1/\alpha\gamma}-y)(s^{1/\alpha\beta}-z)
}
\,ds.\end{aligned}
$$\end{remark}

Third, all $x$, $y$ and $z$ points lie to the right of the corresponding Hankel contours, that is,
$
y\in\Omega^{(+)}(\varepsilon_\beta;\eta_\beta),
\,\,
z\in\Omega^{(+)}(\varepsilon_\gamma;\eta_\gamma),\,\,x\in\Omega^{(+)}(\varepsilon_\alpha;\eta_\alpha)
$.
From \eqref{3.16}, it remains to analyze the remaining contour integral. Applying the change of variables
$
s^{1/\beta\gamma}=t,
$
we obtain
\begin{equation}\label{3.17}\begin{aligned}
E_{\alpha,\beta,\gamma}(x,y,z;\mu)
&=\frac{1}{\gamma}\frac{
e^{z^{1/\gamma}}
\,z^{\frac{\alpha+\beta+1-\mu}{\gamma}}}
{(z^{\alpha/\gamma}-x)(z^{\beta/\gamma}-y)}+\frac{1}{\beta}\frac{
e^{y^{1/\beta}}
\,y^{\frac{\alpha+\gamma+1-\mu}{\beta}}}
{(y^{\alpha/\beta}-x)(y^{\gamma/\beta}-z)}\\
&+
\frac{1}{2\pi i}\,
\frac{1}{\alpha}
\int_{\omega(\varepsilon_\alpha;\eta_\alpha)}
\frac{
e^{t^{1/\alpha}}
\,t^{\frac{\beta+\gamma+1-\mu}{\alpha}}
}
{
(t-x)(t^{\beta/\alpha}-y)(t^{\gamma/\alpha}-z)
}
\,dt.\end{aligned}\end{equation}
Under the transformation
$
s=t^{\beta\gamma},
$
the Hankel contour
$
\omega(\varepsilon;\eta)
$
is mapped onto the Hankel contour
$
\omega(\varepsilon_\alpha;\eta_\alpha).
$ Choose an arbitrary number $\widetilde{\varepsilon}_{\alpha}$ satisfying
$\widetilde{\varepsilon}_{\alpha}>|x|$. Then
$
x\in\Omega^{(-)}(\widetilde{\varepsilon}_{\alpha};\eta_\alpha).
$
Furthermore, setting
$
\widetilde{\varepsilon}_{\beta}
=\widetilde{\varepsilon}_{\alpha}^{\frac{\beta}{\alpha}},
\,\,
\widetilde{\varepsilon}_{\gamma}
=\widetilde{\varepsilon}_{\alpha}^{\frac{\gamma}{\alpha}},
$
it follows that
$
y\in\Omega^{(-)}(\widetilde{\varepsilon}_{\beta};\eta_\beta),
\,\,
z\in\Omega^{(-)}(\widetilde{\varepsilon}_{\gamma};\eta_\gamma).
$ Hence, applying the integral representation \eqref{3.17}, we arrive at
\begin{equation}\label{3.18}\begin{aligned}
E_{\alpha,\beta,\gamma}(x,y,z;\mu)
&=\frac{1}{\gamma}\frac{
e^{z^{1/\gamma}}
\,z^{\frac{\alpha+\beta+1-\mu}{\gamma}}}
{(z^{\alpha/\gamma}-x)(z^{\beta/\gamma}-y)}+\frac{1}{\beta}\frac{
e^{y^{1/\beta}}
\,y^{\frac{\alpha+\gamma+1-\mu}{\beta}}}
{(y^{\alpha/\beta}-x)(y^{\gamma/\beta}-z)}\\
&+
\frac{1}{2\pi i}\,
\frac{1}{\alpha}
\int_{\omega(\widetilde{\varepsilon}_{\alpha};\eta_\alpha)}
\frac{
e^{t^{1/\alpha}}
\,t^{\frac{\beta+\gamma+1-\mu}{\alpha}}
}
{
(t-x)(t^{\beta/\alpha}-y)(t^{\gamma/\alpha}-z)
}
\,dt.\end{aligned}\end{equation}
Moreover, if
$
\varepsilon_\alpha<|x|<\widetilde{\varepsilon}_{\alpha},
$
then
$
|\arg x|<\eta_\alpha,
$
and, by Cauchy's theorem,
\begin{equation}\label{3.19}\begin{aligned}
E_{\alpha,\beta,\gamma}(x,y,z;\mu)
&=
\frac{1}{2\pi i}\,
\frac{1}{\alpha}
\int_{\omega(\widetilde{\varepsilon}_{\alpha};\eta_\alpha)-\omega(\varepsilon_{\alpha};\eta_\alpha)}
\frac{
e^{t^{1/\alpha}}
\,t^{\frac{\beta+\gamma+1-\mu}{\alpha}}
}
{
(t-x)(t^{\beta/\alpha}-y)(t^{\gamma/\alpha}-z)
}
\,dt\\
&=\frac{1}{\alpha}\frac{
e^{x^{1/\alpha}}
\,x^{\frac{\beta+\gamma+1-\mu}{\alpha}}}
{(x^{\beta/\alpha}-y)(x^{\gamma/\alpha}-z)}\end{aligned}
\end{equation}
Combining \eqref{3.19}, \eqref{3.18}, \eqref{3.17}, and \eqref{3.16}, we obtain
 the following integral representation 
\begin{equation}\label{3.20}\begin{aligned}
&E_{\alpha,\beta,\gamma}(x,y,z;\mu)\\
&=\frac{1}{\alpha}\frac{
e^{x^{1/\alpha}}
\,x^{\frac{\beta+\gamma+1-\mu}{\alpha}}}
{(x^{\beta/\alpha}-y)(x^{\gamma/\alpha}-z)}+\frac{1}{\beta}\frac{
e^{y^{1/\beta}}
\,y^{\frac{\alpha+\gamma+1-\mu}{\beta}}}
{(y^{\alpha/\beta}-x)(y^{\gamma/\beta}-z)}+\frac{1}{\gamma}\frac{
e^{z^{1/\gamma}}
\,z^{\frac{\alpha+\beta+1-\mu}{\gamma}}}
{(z^{\alpha/\gamma}-x)(z^{\beta/\gamma}-y)}\\
&+
\frac{1}{2\pi i}\,
\frac{1}{\alpha\beta\gamma}
\int_{\omega(\varepsilon;\eta)}
\frac{
e^{s^{1/(\alpha\beta\gamma)}}
\,s^{\frac{\alpha+\beta+\gamma+1-\mu}{\alpha\beta\gamma}-1}
}
{
(s^{1/\beta\gamma}-x)(s^{1/\alpha\gamma}-y)(s^{1/\alpha\beta}-z)
}
\,ds.\end{aligned}\end{equation}
Thus Lemma \ref{l3.2} is proved. \qed
\end{proof}

\section{Asymptotic behaviors of the three-variable Mittag-Leffler function }\label{sec4}
\begin{theorem}\label{thm4.1}
    Let $0<\alpha,\beta,\gamma<2$ and $\alpha\beta\gamma<2$. Let $\mu$ be any complex number and
$\tau$ be any real number satisfying the inequalities
\begin{equation}\label{4.1}
\frac{\pi\alpha\beta\gamma}{2}<\tau< \min\{\pi,\pi\alpha\beta\gamma\}.
\end{equation}

Then, for all integers $r_{\alpha},r_{\beta,}r_{\gamma}\geq 1$, the function
$E_{\alpha,\beta,\gamma}(x,y,z;\mu)$ verifies the following asymptotic formulas
drawn from its integral representations whenever $|x|\to\infty$,\,\,
$|y|\to\infty$ and $|z|\to\infty$.
\begin{itemize}
    \item If $\tau/\beta\gamma \leq |\arg x| \leq \pi$,\,\, 
$\tau/\alpha\gamma \leq |\arg y| \leq \pi$ and $\tau/\alpha\beta \leq |\arg z| \leq \pi$ then
$$E_{\alpha,\beta,\gamma}(x,y,z;\mu)
=-\sum_{n=1}^{r_{\alpha}}
\sum_{m=1}^{r_{\beta}}
\sum_{p=1}^{r_{\gamma}}
\frac{x^{-n}y^{-m}z^{-p}}{\Gamma(\mu-n\alpha-m\beta-p\gamma)}$$
$$+o\!\left(|x|^{-r_{\alpha}}|xyz|^{-1}\right)
+
o\!\left(|y|^{-r_{\beta}}|xyz|^{-1}\right)
+
o\left(|z|^{-r_{\gamma}}|xyz|^{-1}\right).$$

\item If $ |\arg x| \leq \tau/\beta\gamma $,\,\, 
$  |\arg y| \leq \tau/\alpha\gamma$ and $  |\arg z| \leq \tau/\alpha\beta$ then

$$\begin{aligned}
E_{\alpha,\beta,\gamma}(x,y,z;\mu)&=\frac{1}{\alpha}\frac{
e^{x^{1/\alpha}}
\,x^{\frac{\beta+\gamma+1-\mu}{\alpha}}}
{(x^{\beta/\alpha}-y)(x^{\gamma/\alpha}-z)}+\frac{1}{\beta}\frac{
e^{y^{1/\beta}}
\,y^{\frac{\alpha+\gamma+1-\mu}{\beta}}}
{(y^{\alpha/\beta}-x)(y^{\gamma/\beta}-z)}\\
&+
\frac{1}{\gamma}\frac{
e^{z^{1/\gamma}}
\,z^{\frac{\alpha+\beta+1-\mu}{\gamma}}}
{(z^{\alpha/\gamma}-x)(z^{\beta/\gamma}-y)}
-\sum_{n=1}^{r_{\alpha}}
\sum_{m=1}^{r_{\beta}}
\sum_{p=1}^{r_{\gamma}}
\frac{x^{-n}y^{-m}z^{-p}}{\Gamma(\mu-n\alpha-m\beta-p\gamma)}\\
&+o\!\left(|x|^{-r_{\alpha}}|xyz|^{-1}\right)
+
o\!\left(|y|^{-r_{\beta}}|xyz|^{-1}\right)
+
o\left(|z|^{-r_{\gamma}}|xyz|^{-1}\right).\end{aligned}$$
\end{itemize}
\end{theorem}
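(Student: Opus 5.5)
Choose $\eta$ with $\frac{\pi\alpha\beta\gamma}{2}<\tau<\eta\le\min\{\pi,\pi\alpha\beta\gamma\}$ and fix $\varepsilon=1$. This follows Lavault's argument for Lemma \ref{l2.3}.

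In the first case, $\tau/\beta\gamma\le|\arg x|\le\pi$ with $|x|$ large. Since $\tau<\eta$, this places $x$ in $\Omega^{(-)}(\varepsilon_\alpha;\eta_\alpha)$, and likewise for $y$ and $z$. Lemma \ref{l3.1} then applies. In the second case, $|\arg x|\le\tau/\beta\gamma<\eta_\alpha$ with $|x|>\varepsilon_\alpha$, so $x\in\Omega^{(+)}(\varepsilon_\alpha;\eta_\alpha)$, and similarly for $y,z$. Lemma \ref{l3.2} gives the three explicit residue terms plus the same contour integral
\[
I(x,y,z)=\frac{1}{2\pi i}\frac{1}{\alpha\beta\gamma}\int_{\omega(\varepsilon;\eta)}
\frac{e^{s^{1/(\alpha\beta\gamma)}}s^{\frac{\alpha+\beta+\gamma+1-\mu}{\alpha\beta\gamma}-1}}
{(s^{1/\beta\gamma}-x)(s^{1/\alpha\gamma}-y)(s^{1/\alpha\beta}-z)}\,ds .
\]
Both cases therefore reduce to a single claim:
\[
I=-\sum_{n=1}^{r_\alpha}\sum_{m=1}^{r_\beta}\sum_{p=1}^{r_\gamma}\frac{x^{-n}y^{-m}z^{-p}}{\Gamma(\mu-n\alpha-m\beta-p\gamma)}+\text{remainders}.
\]
This must hold uniformly in the stated sectors as $|x|,|y|,|z|\to\infty$.

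To prove the claim, expand each factor with the finite geometric identity
\[
\frac{1}{s^{1/\beta\gamma}-x}=-\sum_{n=1}^{r_\alpha}s^{(n-1)/\beta\gamma}x^{-n}+\frac{s^{r_\alpha/\beta\gamma}x^{-r_\alpha}}{s^{1/\beta\gamma}-x},
\]
and similarly for the $y$ and $z$ factors. Multiplying out gives a main triple sum plus terms containing at least one remainder factor. The main sum carries the sign $(-1)^3=-1$, and its coefficient of $x^{-n}y^{-m}z^{-p}$ is
\[
\frac{1}{2\pi i\,\alpha\beta\gamma}\int_{\omega(\varepsilon;\eta)}e^{s^{1/(\alpha\beta\gamma)}}s^{\frac{\alpha+\beta+\gamma+1-\mu}{\alpha\beta\gamma}-1+\frac{n-1}{\beta\gamma}+\frac{m-1}{\alpha\gamma}+\frac{p-1}{\alpha\beta}}ds .
\]
Substitute $s=\zeta^{\alpha\beta\gamma}$; the condition $\eta>\pi\alpha\beta\gamma/2$ makes the image contour a Hankel contour with opening angle greater than $\pi/2$. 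The exponent of $\zeta$ becomes $-(\mu-n\alpha-m\beta-p\gamma)$. Hankel's formula $\frac{1}{2\pi i}\int e^{\zeta}\zeta^{-w}d\zeta=1/\Gamma(w)$ then evaluates the coefficient as $1/\Gamma(\mu-n\alpha-m\beta-p\gamma)$, producing the stated sum.

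The main obstacle is the remainder terms. A typical one is
\[
x^{-r_\alpha}\int \frac{e^{s^{1/(\alpha\beta\gamma)}}s^{(\cdots)+r_\alpha/\beta\gamma}}{(s^{1/\beta\gamma}-x)(s^{1/\alpha\gamma}-y)(s^{1/\alpha\beta}-z)}ds ,
\]
and terms mixing expanded and remainder factors are of the same type. They must be shown to be $o(|x|^{-r_\alpha}|xyz|^{-1})$, or the analogous $y$ and $z$ bounds.

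I will use the uniform lower bound $|s^{1/\beta\gamma}-x|\ge c|x|$ for $s\in\omega(\varepsilon;\eta)$ and large $|x|$. In case one this holds because $|\arg x|\ge\tau/\beta\gamma$ while $\arg s^{1/\beta\gamma}$ ranges over $[-\eta_\alpha,\eta_\alpha]$; this is where $\tau<\eta$ must be used, possibly by shrinking to a contour with angle strictly between $\tau$ and the sector boundary. In case two the same bound holds with the roles reversed, since the contour lies at angle $\eta_\alpha>\tau/\beta\gamma$ and $|x|\to\infty$ keeps $x$ off the arc. Analogous bounds hold for $y$ and $z$. Each remainder then carries $|xyz|^{-1}$ times the relevant negative power, multiplied by an integral that converges because $e^{s^{1/(\alpha\beta\gamma)}}$ decays exponentially on the rays, as $\eta/(\alpha\beta\gamma)>\pi/2$.

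This gives $O(|x|^{-r_\alpha-1}|xyz|^{-1})$, which is $o(|x|^{-r_\alpha}|xyz|^{-1})$. Mixed terms are dominated by the pure ones after bounding the expanded factors' extra $|x|^{-n}$ by $|x|^{-1}$. Summing the three residue contributions in case two with $I$ completes the proof.
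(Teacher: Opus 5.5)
Your overall route is the paper's: Lemma~\ref{l3.1} or Lemma~\ref{l3.2} with $\varepsilon=1$, the finite geometric expansion of each denominator factor, Hankel's formula for the coefficients, and lower bounds for the denominators on the contour. Your evaluation of the main triple sum is correct. Two steps, however, fail as written.

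The first is using a single contour angle $\eta>\tau$ for both sectors. In case one this gives $\eta_\alpha=\eta/\beta\gamma>\tau/\beta\gamma$. So an $x$ with $\tau/\beta\gamma\le|\arg x|<\eta_\alpha$ and $|x|>1$ lies in $\Omega^{(+)}(1;\eta_\alpha)$, not in $\Omega^{(-)}(1;\eta_\alpha)$. Points with $|\arg x|=\eta_\alpha$ lie on the image of the rays $S_{\pm\eta}$ under $s\mapsto s^{1/\beta\gamma}$, where the integrand is singular. Hence neither Lemma~\ref{l3.1} nor a bound $|s^{1/\beta\gamma}-x|\ge c|x|$ holds on the whole sector. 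The two sectors need different contours. In case one take $\pi\alpha\beta\gamma/2<\theta<\tau$, which is possible by \eqref{4.1}; this puts $x,y,z$ in the left regions and gives $|s^{1/\beta\gamma}-x|\ge c|x|$ with $c>0$ depending on $(\tau-\theta)/\beta\gamma$. Only in case two take $\tau<\theta\le\min\{\pi,\pi\alpha\beta\gamma\}$. This is what the paper does. Your parenthetical about shrinking the contour points this way, but the angle must lie in $(\pi\alpha\beta\gamma/2,\tau)$, i.e. below $\tau$.

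The second problem is an off-by-one in the remainder. In your typical remainder, the prefactor $x^{-r_\alpha}$ multiplies an integrand whose three denominators are bounded below by $c|x|$, $c|y|$, $c|z|$. The bound you actually get is therefore $C|x|^{-r_\alpha}|xyz|^{-1}=C|x|^{-r_\alpha-1}|y|^{-1}|z|^{-1}$, not $C|x|^{-r_\alpha-1}|xyz|^{-1}$, and the step ``which is $o(|x|^{-r_\alpha}|xyz|^{-1})$'' has no basis.

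No sharper estimate can rescue it. Run the same uniform argument with $r_\alpha+1$ in place of $r_\alpha$: the remainder then contains $-x^{-r_\alpha-1}y^{-1}z^{-1}/\Gamma(\mu-(r_\alpha+1)\alpha-\beta-\gamma)$, which is exactly of order $|x|^{-r_\alpha}|xyz|^{-1}$. This term dominates the remainder when $|y|,|z|$ grow much faster than $|x|$. Concretely, take $\alpha=\beta=\gamma=1$, $r_\alpha=r_\beta=r_\gamma=r$, $x=y=z=-t$ and non-integer $\mu$. The remainder is $-3(-t)^{-r-3}/\Gamma(\mu-r-3)+O(t^{-r-4})$, which is not $o(t^{-r-3})$.

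The paper's proof has the same defect. It derives $I_1=O(|x|^{-r_\alpha-1}|y|^{-1}|z|^{-1})+\cdots$ and then asserts $o$ of the same quantities, citing only that $I_2,I_3$ are smaller. What your argument (once the case-one contour is fixed) and the paper's argument actually prove is the statement with $O$ in place of $o$. That is the sharp form, as in the classical one-variable expansion $E_{\gamma}(z;\mu)=-\sum_{k=1}^{r}z^{-k}/\Gamma(\mu-k\gamma)+O(|z|^{-r-1})$.
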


\begin{proof} The denominator in \eqref{three-variable integral representation} can be rewritten as follows:
\begin{equation}\label{3 variable denominator}\begin{aligned}\frac{1}
{\left(s^{\frac{1}{\beta\gamma}}-x\right)
 \left(s^{\frac{1}{\alpha\gamma}}-y\right)
 \left(s^{\frac{1}{\alpha\beta}}-z\right)}&=-
\sum_{n=1}^{r_{\alpha}}
\sum_{m=1}^{r_{\beta}}
\sum_{p=1}^{r_{\gamma}}
\frac{
s^{\frac{n-1}{\beta\gamma}
+\frac{m-1}{\alpha\gamma}
+\frac{p-1}{\alpha\beta}}
}
{x^n y^m z^p}
\\&+
\frac{
x^{r_{\alpha}}y^{r_{\beta}}
s^{\frac{r_{\gamma}}{\alpha\beta}}
+x^{r_{\alpha}}z^{r_{\gamma}}
s^{\frac{r_{\beta}}{\alpha\gamma}}
+y^{r_{\beta}}z^{r_{\gamma}}
s^{\frac{r_{\alpha}}{\beta\gamma}}
}
{
x^{r_{\alpha}}
y^{r_{\beta}}
z^{r_{\gamma}}
\left(s^{\frac{1}{\beta\gamma}}-x\right)
\left(s^{\frac{1}{\alpha\gamma}}-y\right)
\left(s^{\frac{1}{\alpha\beta}}-z\right)
}
\\&-
\frac{
x^{r_{\alpha}}
s^{\frac{r_{\beta}}{\alpha\gamma}
+\frac{r_{\gamma}}{\alpha\beta}}
+y^{r_{\beta}}
s^{\frac{r_{\alpha}}{\beta\gamma}
+\frac{r_{\gamma}}{\alpha\beta}}
+z^{r_{\gamma}}
s^{\frac{r_{\alpha}}{\beta\gamma}
+\frac{r_{\beta}}{\alpha\gamma}}
}
{
x^{r_{\alpha}}
y^{r_{\beta}}
z^{r_{\gamma}}
\left(s^{\frac{1}{\beta\gamma}}-x\right)
\left(s^{\frac{1}{\alpha\gamma}}-y\right)
\left(s^{\frac{1}{\alpha\beta}}-z\right)
}
\\&+
\frac{
s^{\frac{r_{\alpha}}{\beta\gamma}
+\frac{r_{\beta}}{\alpha\gamma}
+\frac{r_{\gamma}}{\alpha\beta}}
}
{
x^{r_{\alpha}}
y^{r_{\beta}}
z^{r_{\gamma}}
\left(s^{\frac{1}{\beta\gamma}}-x\right)
\left(s^{\frac{1}{\alpha\gamma}}-y\right)
\left(s^{\frac{1}{\alpha\beta}}-z\right)
}.\end{aligned}\end{equation}

The proof of \eqref{3 variable denominator} is presented in Section~\ref{sec5} (see \eqref{5.10}-\eqref{5.15}).

Let us
choose a real number $\theta$ satisfying
\begin{equation}\label{1.5}
\frac{\pi\alpha\beta\gamma}{2}
<
\theta
<
\tau
<
\min\{\pi,\pi\alpha\beta\gamma\}.
\end{equation}

Using the corresponding integral representation established in Lemma \ref{l3.1}, we may assume, without loss of generality, that $\varepsilon=1$, since the representation is valid for every $\varepsilon>0$. Substituting the decomposition into \eqref{three-variable integral representation}, we obtain

$$E_{\alpha,\beta,\gamma}(x,y,z;\mu)$$
$$
=\sum_{n=1}^{r_{\alpha}}
\sum_{m=1}^{r_{\beta}}
\sum_{p=1}^{r_{\gamma}}
\frac{1}{2\pi i}\,
\frac{1}{\alpha\beta\gamma}
\left(\int_{\omega(1;\theta)}
e^{s^{1/(\alpha\beta\gamma)}}s^{\frac{\alpha+\beta+\gamma+1-\mu}{\alpha\beta\gamma}-1+\frac{n-1}{\beta\gamma}+\frac{m-1}{\alpha\gamma}+\frac{p-1}{\alpha\beta}}
\,ds\right)x^{-n}y^{-m}z^{-p}$$
$$+\frac{1}{2\pi i}\,
\frac{1}{\alpha\beta\gamma}
\left(\int_{\omega(1;\theta)}
e^{s^{1/(\alpha\beta\gamma)}}s^{\frac{\alpha+\beta+\gamma+1-\mu}{\alpha\beta\gamma}-1}\frac{
x^{r_{\alpha}}y^{r_{\beta}}
s^{\frac{r_{\gamma}}{\alpha\beta}}
+x^{r_{\alpha}}z^{r_{\gamma}}
s^{\frac{r_{\beta}}{\alpha\gamma}}
+y^{r_{\beta}}z^{r_{\gamma}}
s^{\frac{r_{\alpha}}{\beta\gamma}}
}
{
x^{r_{\alpha}}
y^{r_{\beta}}
z^{r_{\gamma}}
\left(s^{\frac{1}{\beta\gamma}}-x\right)
\left(s^{\frac{1}{\alpha\gamma}}-y\right)
\left(s^{\frac{1}{\alpha\beta}}-z\right)
}ds\right)$$
$$-\frac{1}{2\pi i}\,
\frac{1}{\alpha\beta\gamma}
\left(\int_{\omega(1;\theta)}
e^{s^{1/(\alpha\beta\gamma)}}s^{\frac{\alpha+\beta+\gamma+1-\mu}{\alpha\beta\gamma}-1}\frac{
x^{r_{\alpha}}
s^{\frac{r_{\beta}}{\alpha\gamma}
+\frac{r_{\gamma}}{\alpha\beta}}
+y^{r_{\beta}}
s^{\frac{r_{\alpha}}{\beta\gamma}
+\frac{r_{\gamma}}{\alpha\beta}}
+z^{r_{\gamma}}
s^{\frac{r_{\alpha}}{\beta\gamma}
+\frac{r_{\beta}}{\alpha\gamma}}
}
{
x^{r_{\alpha}}
y^{r_{\beta}}
z^{r_{\gamma}}
\left(s^{\frac{1}{\beta\gamma}}-x\right)
\left(s^{\frac{1}{\alpha\gamma}}-y\right)
\left(s^{\frac{1}{\alpha\beta}}-z\right)
}ds\right)$$
$$+\frac{1}{2\pi i}\,
\frac{1}{\alpha\beta\gamma}
\left(\int_{\omega(1;\theta)}
e^{s^{1/(\alpha\beta\gamma)}}s^{\frac{\alpha+\beta+\gamma+1-\mu}{\alpha\beta\gamma}-1}\frac{
s^{\frac{r_{\alpha}}{\beta\gamma}
+\frac{r_{\beta}}{\alpha\gamma}
+\frac{r_{\gamma}}{\alpha\beta}}
}
{
x^{r_{\alpha}}
y^{r_{\beta}}
z^{r_{\gamma}}
\left(s^{\frac{1}{\beta\gamma}}-x\right)
\left(s^{\frac{1}{\alpha\gamma}}-y\right)
\left(s^{\frac{1}{\alpha\beta}}-z\right)
}ds\right).$$

By simplifying the first expression, we obtain
$$-\sum_{n=1}^{r_{\alpha}}
\sum_{m=1}^{r_{\beta}}
\sum_{p=1}^{r_{\gamma}}
\frac{1}{2\pi i}\,
\frac{1}{\alpha\beta\gamma}
\left(\int_{\omega(1;\theta)}
e^{s^{1/(\alpha\beta\gamma)}}s^{\frac{1-\mu}{\alpha\beta\gamma}-1+\frac{n}{\beta\gamma}+\frac{m}{\alpha\gamma}+\frac{p}{\alpha\beta}}
\,ds\right)x^{-n}y^{-m}z^{-p}$$

$$=-\sum_{n=1}^{r_{\alpha}}\sum_{m=1}^{r_{\beta}}
\sum_{p=1}^{r_{\gamma}}
\frac{1}{2\pi i}\,
\frac{1}{\alpha\beta\gamma}
\left(\int_{\omega(1;\theta)}
e^{s^{1/(\alpha\beta\gamma)}}s^{\frac{1-(\mu-\alpha n-\beta m-\gamma p)}{\alpha\beta\gamma}-1}
\,ds\right)x^{-n}y^{-m}z^{-p}.$$

Using the Hankel integral representation of the reciprocal Gamma function (see \cite{Podlubny}, p. 16),  
\begin{equation}\label{gamma integral representation}
\frac{1}{\Gamma(a)} =
\frac{1}{2 b \pi i}
\int_{\omega(1;\theta)}
e^{s^{1/b}}\,
s^{\frac{1-a}{b}-1}\, ds,
\end{equation}
we get

$$E_{\alpha,\beta,\gamma}(x,y,z;\mu)
=-\sum_{n=1}^{r_{\alpha}}
\sum_{m=1}^{r_{\beta}}
\sum_{p=1}^{r_{\gamma}}
\frac{x^{-n}y^{-m}z^{-p}}{\Gamma(\mu-n\alpha-m\beta-p\gamma)}+I_1-I_2+I_3,
$$
where
$$I_1=\frac{1}{2\pi i}\,
\frac{1}{\alpha\beta\gamma}
\left(\int_{\omega(1;\theta)}
e^{s^{1/(\alpha\beta\gamma)}}s^{\frac{\alpha+\beta+\gamma+1-\mu}{\alpha\beta\gamma}-1}\frac{
x^{r_{\alpha}}y^{r_{\beta}}
s^{\frac{r_{\gamma}}{\alpha\beta}}
+x^{r_{\alpha}}z^{r_{\gamma}}
s^{\frac{r_{\beta}}{\alpha\gamma}}
+y^{r_{\beta}}z^{r_{\gamma}}
s^{\frac{r_{\alpha}}{\beta\gamma}}
}
{
x^{r_{\alpha}}
y^{r_{\beta}}
z^{r_{\gamma}}
\left(s^{\frac{1}{\beta\gamma}}-x\right)
\left(s^{\frac{1}{\alpha\gamma}}-y\right)
\left(s^{\frac{1}{\alpha\beta}}-z\right)
}ds\right),$$
$$
I_2=\frac{1}{2\pi i}\,
\frac{1}{\alpha\beta\gamma}
\left(\int_{\omega(1;\theta)}
e^{s^{1/(\alpha\beta\gamma)}}s^{\frac{\alpha+\beta+\gamma+1-\mu}{\alpha\beta\gamma}-1}\frac{
x^{r_{\alpha}}
s^{\frac{r_{\beta}}{\alpha\gamma}
+\frac{r_{\gamma}}{\alpha\beta}}
+y^{r_{\beta}}
s^{\frac{r_{\alpha}}{\beta\gamma}
+\frac{r_{\gamma}}{\alpha\beta}}
+z^{r_{\gamma}}
s^{\frac{r_{\alpha}}{\beta\gamma}
+\frac{r_{\beta}}{\alpha\gamma}}
}
{
x^{r_{\alpha}}
y^{r_{\beta}}
z^{r_{\gamma}}
\left(s^{\frac{1}{\beta\gamma}}-x\right)
\left(s^{\frac{1}{\alpha\gamma}}-y\right)
\left(s^{\frac{1}{\alpha\beta}}-z\right)
}ds\right),$$
$$
I_3=\frac{1}{2\pi i}\,
\frac{1}{\alpha\beta\gamma}
\left(\int_{\omega(1;\theta)}
e^{s^{1/(\alpha\beta\gamma)}}s^{\frac{\alpha+\beta+\gamma+1-\mu}{\alpha\beta\gamma}-1}\frac{
s^{\frac{r_{\alpha}}{\beta\gamma}
+\frac{r_{\beta}}{\alpha\gamma}
+\frac{r_{\gamma}}{\alpha\beta}}
}
{
x^{r_{\alpha}}
y^{r_{\beta}}
z^{r_{\gamma}}
\left(s^{\frac{1}{\beta\gamma}}-x\right)
\left(s^{\frac{1}{\alpha\gamma}}-y\right)
\left(s^{\frac{1}{\alpha\beta}}-z\right)
}ds\right).$$

Provided that
$
\cfrac{\tau}{\beta\gamma}\leq|\arg x|\le\pi,
$
for sufficiently large $|x|$, one has
\[
\min_{s\in\omega(1;\theta)}
\left|s^{1/\beta\gamma}-x\right|
= |x|
\sin\left(\cfrac{\tau}{\beta\gamma}-\cfrac{\theta}{\beta\gamma}\right)=|x|\sin\left(\cfrac{\tau-\theta}{\beta\gamma}\right).
\]
Similarly, for sufficiently large $|y|$ and $|z|$, provided that
\[
\frac{\tau}{\alpha\gamma}\leq|\arg y|\le\pi,
\qquad
\frac{\tau}{\alpha\beta}\leq|\arg z|\le\pi,
\]
we have
\[
\min_{s\in\omega(1;\theta)}
\left|s^{1/\alpha\gamma}-y\right|
= |y|
\sin\left(\cfrac{\tau}{\alpha\gamma}-\cfrac{\theta}{\alpha\gamma}\right)=|y|\sin\left(\cfrac{\tau-\theta}{\alpha\gamma}\right),
\]
and
\[
\min_{s\in\omega(1;\theta)}
\left|s^{1/\alpha\beta}-z\right|
= |z|
\sin\left(\cfrac{\tau}{\alpha\beta}-\cfrac{\theta}{\alpha\beta}\right)=|z|\sin\left(\cfrac{\tau-\theta}{\alpha\beta}\right).
\]

Then, we obtain
\begin{equation}\label{1.6}
\begin{aligned}
&|I_1|
\le
\frac{
1
}
{
2\pi\alpha\beta\gamma
\sin\left(\cfrac{\tau-\theta}{\beta\gamma}\right)\sin\left(\cfrac{\tau-\theta}{\alpha\gamma}\right)\sin\left(\cfrac{\tau-\theta}{\alpha\beta}\right)
}
\\&\times\Bigg(\int_{\omega(1;\theta)}
\left|
e^{s^{1/(\alpha\beta\gamma)}}
\right|
\left|
s^{
\frac{\alpha+\beta+\gamma+1-\mu}{\alpha\beta\gamma}
-1
+\frac{r_{\alpha}}{\beta\gamma}
}
\right||x|^{-r_{\alpha}-1}|y|^{-1}|z|^{-1}
|ds|\\
&+\int_{\omega(1;\theta)}
\left|
e^{s^{1/(\alpha\beta\gamma)}}
\right|
\left|
s^{
\frac{\alpha+\beta+\gamma+1-\mu}{\alpha\beta\gamma}
-1
+\frac{r_{\beta}}{\alpha\gamma}
}
\right||x|^{-1}|y|^{-r_{\beta}-1}|z|^{-1}
|ds|\\
&+\int_{\omega(1;\theta)}
\left|
e^{s^{1/(\alpha\beta\gamma)}}
\right|
\left|
s^{
\frac{\alpha+\beta+\gamma+1-\mu}{\alpha\beta\gamma}
-1
+\frac{r_{\gamma}}{\alpha\beta}
}
\right||x|^{-1}|y|^{-1}|z|^{-r_{\gamma}-1}
|ds|\Bigg).
\end{aligned}\end{equation}

Moreover, since the rays defined by
$
S_{\pm\theta}
=
\left\{
\arg s = \pm \theta,\; |s| \ge 1
\right\}
$
belong to the contour $\omega(1;\theta)$, all the integrals in \eqref{1.6} are convergent; since
\[
\left|e^{s^{1/(\alpha\beta\gamma)}}\right|
=
\exp\!\left(
|s|^{1/(\alpha\beta\gamma)}
\cos\!\left(\frac{\theta}{\alpha\beta\gamma}\right)
\right).
\]

Now, according to inequalities \eqref{1.5}, we have that $\cos\!\left(\cfrac{\theta}{\alpha\beta\gamma}\right)<0$. Thus,
$$I_1=O\left(|x|^{-r_{\alpha}-1}|y|^{-1}|z|^{-1}\right)+O\left(|x|^{-1}|y|^{-r_{\beta}-1}|z|^{-1}\right)+O\left(|x|^{-1}|y|^{-1}|z|^{-r_{\gamma}-1}\right).$$

By proceeding in the same way for $I_2$ and $I_3$, we arrive at
\[
I_2=O\left(|x|^{-r_{\alpha}-1}|y|^{-r_{\beta}-1}|z|^{-1}\right)+O\left(|x|^{-1}|y|^{-r_{\beta}-1}|z|^{-r_{\gamma}-1}\right)+O\left(|x|^{-r_{\alpha}-1}|y|^{-1}|z|^{-r_{\gamma}-1}\right),
\]
and
\[
I_3=O\left(|x|^{-r_{\alpha}-1}|y|^{-r_{\beta}-1}|z|^{-r_{\gamma}-1}\right).
\]

Since the contributions of \(I_2\) and \(I_3\) are of higher order (i.e., decay faster), we obtain
$$
I_1-I_2+I_3
=
o\!\left(|x|^{-r_{\alpha}-1}|y|^{-1}|z|^{-1}\right)
+
o\!\left(|x|^{-1}|y|^{-r_{\beta}-1}|z|^{-1}\right)
+
o\!\left(|x|^{-1}|y|^{-1}|z|^{-r_{\gamma}-1}\right),
$$
as $|x|,|y|,|z|\to\infty$.

Therefore, as
$|x|,|y|,|z|\to\infty$,
we obtain

$$E_{\alpha,\beta,\gamma}(x,y,z;\mu)
=-\sum_{n=1}^{r_{\alpha}}
\sum_{m=1}^{r_{\beta}}
\sum_{p=1}^{r_{\gamma}}
\frac{x^{-n}y^{-m}z^{-p}}{\Gamma(\mu-n\alpha-m\beta-p\gamma)}$$
$$+o\!\left(|x|^{-r_{\alpha}}|xyz|^{-1}\right)
+
o\!\left(|y|^{-r_{\beta}}|xyz|^{-1}\right)
+
o\left(|z|^{-r_{\gamma}}|xyz|^{-1}\right).$$

If $ |\arg x| \leq \tau/\beta\gamma $,\,\, 
$  |\arg y| \leq \tau/\alpha\gamma$ and $  |\arg z| \leq \tau/\alpha\beta$, we choose a real number $\theta$ satisfying
$$
\frac{\pi\alpha\beta\gamma}{2}
<
\tau
<
\theta
\leq
\min\{\pi,\pi\alpha\beta\gamma\}.
$$

Using the corresponding integral representations established in Lemma \ref{l3.2}, the proof proceeds in the same way as the above one. The only difference is that the corresponding lower estimates

\[
\min_{s\in\omega(1;\theta)}
\left|s^{1/\beta\gamma}-x\right|
= |x|
\sin\left(\cfrac{\theta}{\beta\gamma}-\cfrac{\tau}{\beta\gamma}\right)=|x|\sin\left(\cfrac{\theta-\tau}{\beta\gamma}\right),
\]

\[
\min_{s\in\omega(1;\theta)}
\left|s^{1/\alpha\gamma}-y\right|
= |y|
\sin\left(\cfrac{\theta}{\alpha\gamma}-\cfrac{\tau}{\alpha\gamma}\right)=|y|\sin\left(\cfrac{\theta-\tau}{\alpha\gamma}\right),
\]

\[
\min_{s\in\omega(1;\theta)}
\left|s^{1/\alpha\beta}-z\right|
= |z|
\sin\left(\cfrac{\theta}{\alpha\beta}-\cfrac{\tau}{\alpha\beta}\right)=|z|\sin\left(\cfrac{\theta-\tau}{\alpha\beta}\right).
\]
are used instead.
Then, we get
$$\begin{aligned}
E_{\alpha,\beta,\gamma}(x,y,z;\mu)&=\frac{1}{\alpha}\frac{
e^{x^{1/\alpha}}
\,x^{\frac{\beta+\gamma+1-\mu}{\alpha}}}
{(x^{\beta/\alpha}-y)(x^{\gamma/\alpha}-z)}+\frac{1}{\beta}\frac{
e^{y^{1/\beta}}
\,y^{\frac{\alpha+\gamma+1-\mu}{\beta}}}
{(y^{\alpha/\beta}-x)(y^{\gamma/\beta}-z)}\\
&+
\frac{1}{\gamma}\frac{
e^{z^{1/\gamma}}
\,z^{\frac{\alpha+\beta+1-\mu}{\gamma}}}
{(z^{\alpha/\gamma}-x)(z^{\beta/\gamma}-y)}
-\sum_{n=1}^{r_{\alpha}}
\sum_{m=1}^{r_{\beta}}
\sum_{p=1}^{r_{\gamma}}
\frac{x^{-n}y^{-m}z^{-p}}{\Gamma(\mu-n\alpha-m\beta-p\gamma)}\\
&+o\!\left(|x|^{-r_{\alpha}}|xyz|^{-1}\right)
+
o\!\left(|y|^{-r_{\beta}}|xyz|^{-1}\right)
+
o\left(|z|^{-r_{\gamma}}|xyz|^{-1}\right).\end{aligned}$$
Thus Theorem \ref{thm4.1} is proved. \qed \end{proof}

\section{Integral representations and asymptotic behaviors of the multivariate Mittag-Leffler function}\label{sec5}

First, we introduce the following notations:
$$\beta_l=\cfrac{\prod\limits_{{j=1}}^n\alpha_j}{\alpha_l}, \qquad
\gamma_l=\sum_{j=1}^n\alpha_j-\alpha_l,\qquad l=1,\dots,n.
$$
\begin{lemma}\label{l5.1}
    Let $0 < \alpha_1, \alpha_2,\dots,\alpha_n < 2$ and $\prod\limits_{{j=1}}^n\alpha_j < 2$. Let $\mu$ be any complex number and let $\eta$ satisfy the condition
\begin{equation}\label{5.1}
\frac{\pi}{2}\prod_{{j=1}}^n\alpha_j
<
\eta
\leq
\min\!\Bigl(\pi,\pi\prod_{{j=1}}^n\alpha_j\Bigr).
\end{equation}

If
$
z_j \in \Omega^{(-)}({\varepsilon}_{\alpha_j};\eta_{\alpha_j})
$,
where
$
{\varepsilon}_{\alpha_j}={\varepsilon}^{\frac{1}{\beta_j}}$ and $\eta_{\alpha_j}=\cfrac{\eta}{\beta_j}$, $j=1,\dots,n,$
then the following Hankel integral representation holds:
\begin{equation}\label{multi variable integral representation}
E_{\alpha_1, \dots, \alpha_n }(z_1, \dots, z_n;\mu)
=
\frac{1}{2\pi i}\,
\frac{1}{\prod\limits_{{j=1}}^n\alpha_j}
\int_{\omega(\varepsilon;\eta)}
\cfrac{
e^{s^{1/\prod\limits_{{j=1}}^n\alpha_j}}
\,s^{\frac{\sum\limits_{j=1}^n\alpha_j+1-\mu}{\prod\limits_{j=1}^n\alpha_j}-1}
}
{
\prod\limits_{j=1}^n(s^{1/\beta_j}-z_j)
}
\,ds.
\end{equation}
\end{lemma}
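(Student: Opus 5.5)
I will mimic the proof of Lemma \ref{l3.1}, singling out the last variable $z_n$ and expanding the others as geometric series. Write $P=\prod_{j=1}^n\alpha_j$. First I rewrite the series as
\[
E_{\alpha_1,\dots,\alpha_n}(z_1,\dots,z_n;\mu)=\sum_{k_1,\dots,k_{n-1}=0}^{\infty}\prod_{j=1}^{n-1}z_j^{k_j}\,E_{\alpha_n}\Bigl(z_n;\mu+\sum_{j=1}^{n-1}\alpha_jk_j\Bigr).
\]
For $z_n\in\Omega^{(-)}(\varepsilon_{\alpha_n};\eta_{\alpha_n})$ I then invoke the Podlubny representation of $E_{\alpha_n}(z_n;\cdot)$ on the contour $\omega(\varepsilon_{\alpha_n};\eta_{\alpha_n})$. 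This is admissible because $\eta_{\alpha_n}=\eta/\beta_n$ and \eqref{5.1} give $\pi\alpha_n/2<\eta_{\alpha_n}\le\min(\pi,\pi\alpha_n)$. The integrand is
\[
\frac{1}{\alpha_n}\,\frac{e^{s^{1/\alpha_n}}\,s^{(1-\mu-\sum_{j<n}\alpha_jk_j)/\alpha_n}}{s-z_n}.
\]

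The second step treats small $z_1,\dots,z_{n-1}$. Assume $|z_j|<\varepsilon_{\alpha_j}$ for $j<n$. Since $\varepsilon_{\alpha_j}=\varepsilon^{\alpha_j/P}=\varepsilon_{\alpha_n}^{\alpha_j/\alpha_n}$ and $|s|\ge\varepsilon_{\alpha_n}$ on the contour, we get
\[
\sup_{s\in\omega(\varepsilon_{\alpha_n};\eta_{\alpha_n})}\bigl|z_js^{-\alpha_j/\alpha_n}\bigr|<1.
\]
This bound is uniform in $s$, so the geometric series converge uniformly. Combined with the exponential decay of $e^{s^{1/\alpha_n}}$ along the rays (because $\cos(\eta_{\alpha_n}/\alpha_n)<0$), this justifies interchanging summation and integration. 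Summing each series gives $s^{\alpha_j/\alpha_n}/(s^{\alpha_j/\alpha_n}-z_j)$, which yields the analogue of \eqref{1.3}:
\[
E=\frac{1}{2\pi i}\frac{1}{\alpha_n}\int_{\omega(\varepsilon_{\alpha_n};\eta_{\alpha_n})}\frac{e^{s^{1/\alpha_n}}s^{(\gamma_n+1-\mu)/\alpha_n}}{\prod_{j<n}(s^{\alpha_j/\alpha_n}-z_j)\,(s-z_n)}\,ds.
\]

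Third, I substitute $s=\zeta^{1/\beta_n}$, so that $ds=\beta_n^{-1}\zeta^{1/\beta_n-1}d\zeta$. This maps $\omega(\varepsilon;\eta)$ onto $\omega(\varepsilon_{\alpha_n};\eta_{\alpha_n})$ and preserves orientation. The factors transform as follows:
\begin{itemize}
\item $s^{1/\alpha_n}=\zeta^{1/P}$;
\item $s^{\alpha_j/\alpha_n}=\zeta^{\alpha_j/P}=\zeta^{1/\beta_j}$;
\item the power of $s$ combines with the Jacobian into $\zeta^{(\sum_j\alpha_j+1-\mu)/P-1}$;
\item the constants combine as $\alpha_n\beta_n=P$.
\end{itemize}
This gives exactly \eqref{multi variable integral representation}.

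Finally, the right-hand side converges absolutely and is analytic in each $z_j\in\Omega^{(-)}(\varepsilon_{\alpha_j};\eta_{\alpha_j})$, since $z_j$ stays off the curve $\{\zeta^{1/\beta_j}\}=\omega(\varepsilon_{\alpha_j};\eta_{\alpha_j})$. The small-polydisc region where we proved the identity lies inside this product domain, which is connected, so analytic continuation extends the identity to the whole product.

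The main obstacle I expect is the justification of the interchange and the continuation step. One must check that $\zeta\mapsto\zeta^{1/\beta_j}$ maps $\omega(\varepsilon;\eta)$ exactly onto $\omega(\varepsilon_{\alpha_j};\eta_{\alpha_j})$ with $\eta_{\alpha_j}\le\pi$; this holds because $\beta_j\ge P/2\cdot$ is not needed, only $\eta/\beta_j\le\pi$, which requires care if some $\beta_j<1$. In that case I would restrict to $\eta\le\pi\beta_j$ or interpret $\Omega^{(-)}$ on the Riemann surface, as is implicitly done in Lemma \ref{l3.1}.
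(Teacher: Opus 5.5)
Your proposal is essentially the paper's own proof. It uses the same splitting off of $z_n$, Podlubny's representation on $\omega(\varepsilon_{\alpha_n};\eta_{\alpha_n})$, geometric-series summation for $|z_j|<\varepsilon_{\alpha_j}$, the substitution $s=\zeta^{1/\beta_n}$, and analytic continuation from the polydisc.

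One correction: \eqref{5.1} does not by itself give $\eta_{\alpha_n}\le\pi$, so your first-paragraph admissibility claim is false as written. For example, $\alpha_1=\tfrac12$, $\alpha_2=\tfrac32$, $\eta\in(\pi/2,3\pi/4]$ yields $\eta_{\alpha_2}=2\eta>\pi$. The restriction $\eta\le\pi\min_j\beta_j$ you suggest at the end is the right repair; the paper glosses over the same point with ``admissible in view of (5.1)''.
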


\begin{proof}
We rewrite the $n$-variable Mittag--Leffler function in the following form:
\begin{equation}\label{5.2}
\begin{aligned}
&
E_{\alpha_1,\dots,\alpha_n}(z_1,\dots,z_n;\mu)
=
\sum_{k_1=0}^{\infty}
\sum_{k_2=0}^{\infty}\cdots
\sum_{k_n=0}^{\infty}
\frac{\prod\limits_{j=1}^{n}z_j^{k_j}}
{\Gamma\!\left(\mu+\sum_{j=1}^{n}\alpha_jk_j\right)}\\
&=
\sum_{k_1=0}^{\infty}
\sum_{k_2=0}^{\infty}\cdots\sum_{k_{n-1}=0}^{\infty}z_1^{k_1}z_2^{k_2}\cdots z_{n-1}^{k_{n-1}}
\sum_{k_n=0}^{\infty}
\frac{z_n^{k_n}}
{\Gamma\!\left(\alpha_nk_n+\left(\sum_{j=1}^{n-1}\alpha_jk_j+\mu\right)\right)}\\
&=\sum_{k_1=0}^{\infty}
\sum_{k_2=0}^{\infty}\cdots\sum_{k_{n-1}=0}^{\infty}z_1^{k_1}z_2^{k_2}\cdots z_{n-1}^{k_{n-1}}E_{\alpha_n}(z_n;\alpha_1k_1+\dots\alpha_{n-1}k_{n-1}+\mu).
\end{aligned}\end{equation}

Under the assumptions of Lemma \ref{l5.1}, one can employ the known integral representation of
$E_{\alpha_n}\left(z_n;\alpha_1 k_1+...+\alpha_{n-1}k_{n-1}+\mu\right)$, taking the above
$\varepsilon_{\alpha_n}$ and $\eta_{\alpha_n}$ as the parameters defining the Hankel contour.
This choice is admissible in view of inequalities \eqref{5.1}. Consequently, for
$
z_n\in\Omega^{(-)}(\varepsilon_{\alpha_n};\eta_{\alpha_n}),
$
and provided that
$
\eta_{\alpha_n}=\cfrac{\eta}{\beta_n},
$
the following representation follows from the integral representation of
$E_{\alpha_n}(z_n;\alpha_1k_1+\dots+\alpha_{n-1}k_{n-1}+\mu)$ (see \cite{Podlubny}, p. 30):
$$
E_{\alpha_n}(z_n;\alpha_1k_1+\dots+\alpha_{n-1}k_{n-1}+\mu)
=
\frac{1}{2\pi i}\,
\frac{1}{\alpha_n}
\int_{\omega(\varepsilon_{\alpha_n};\eta_{\alpha_n})}
\frac{
e^{s^{1/\alpha_n}}
\,s^{\frac{1-\alpha_1 k_1-\dots -\alpha_{n-1}k_{n-1}-\mu}{\alpha_n}}
}
{
(s-z_n)
}
\,ds.
$$
We rewrite \eqref{5.2} in the following form:
\begin{equation}\label{5.3}
\begin{aligned}
&E_{\alpha_1,\dots,\alpha_n}(z_1,\dots,z_n;\mu)\\
&=\frac{1}{2\pi i}\,
\frac{1}{\alpha_n}
\int_{\omega(\varepsilon_{\alpha_n};\eta_{\alpha_n})}
\frac{
e^{s^{1/\alpha_n}}
\,s^{\frac{1-\mu}{\alpha_n}}
}
{
(s-z_n)
}\left(\sum_{k_1=0}^{\infty}\left(z_1s^{-\alpha_1/\alpha_n}\right)^{k_1}\cdots\sum_{k_{n-1}=0}^{\infty}\left(z_{n-1}s^{-\alpha_{n-1}/\alpha_n}\right)^{k_{n-1}}\right)
\,ds.
\end{aligned}\end{equation}
Let $|z_1|<\varepsilon_{\alpha_1}$,$\dots$, $|z_{n-1}|<\varepsilon_{\alpha_{n-1}}$. Taking into account the fact that $\varepsilon_{\alpha_1}=\varepsilon^{1/\beta_1}=\left(\varepsilon_{\alpha_n}^{\beta_n}\right)^{1/\beta_1}=\varepsilon_{\alpha_n}^{\alpha_1/\alpha_n}$,$\dots$, $\varepsilon_{\alpha_{n-1}}=\varepsilon^{1/\beta_{n-1}}=\left(\varepsilon_{\alpha_n}^{\beta_{n}}\right)^{1/\beta_{n-1}}=\varepsilon_{\alpha_n}^{\alpha_{n-1}/\alpha_n}$ yields next the inequalities
\begin{equation}\label{5.4}
\sup_{s\in\omega(\varepsilon_{\alpha_n};\eta_{\alpha_n})}
\left|z_1s^{-\alpha_1/\alpha_n}\right|<1,
\end{equation}
$$\vdots$$
 \begin{equation}\label{5.5}
\sup_{s\in\omega(\varepsilon_{\alpha_n};\eta_{\alpha_n})}
\left|z_{n-1}s^{-\alpha_{n-1}/\alpha_n}\right|<1.
\end{equation}

Using \eqref{5.4}--\eqref{5.5} inequalities, we rewrite \eqref{5.3} in the following form:

\begin{equation}\label{5.6}
E_{\alpha_1,\dots,\alpha_n}(z_1,\dots,z_n;\mu)=
\frac{1}{2\pi i}\,
\frac{1}{\alpha_n}
\int_{\omega(\varepsilon_{\alpha_n};\eta_{\alpha_n})}
\frac{
e^{s^{1/\alpha_n}}
\,s^{\frac{\gamma_n+1-\mu}{\alpha_n}}
}
{
\left(\prod\limits_{j=1}^{n-1}(s^{\alpha_j/\alpha_n}-z_j)\right)(s-z_n)
}
\,ds.
\end{equation}
We introduce the notation
$s=\zeta^{\frac{1}{\beta_n}}$, $ds=\cfrac{1}{\beta_n}\zeta^{\frac{1-\beta_n}{\beta_n}}d\zeta$, we get
$$
E_{\alpha_1, \dots, \alpha_n }(z_1, \dots, z_n;\mu)
=
\frac{1}{2\pi i}\,
\frac{1}{\prod\limits_{{j=1}}^n\alpha_j}
\int_{\omega(\varepsilon;\eta)}
\frac{
e^{s^{1/\prod\limits_{{j=1}}^n\alpha_j}}
\,s^{\frac{\sum\limits_{j=1}^n\alpha_j+1-\mu}{\prod\limits_{j=1}^n\alpha_j}-1}
}
{
\prod\limits_{j=1}^n(s^{1/\beta_j}-z_j)
}
\,ds.
$$

The resulting integral is absolutely convergent and defines an analytic function of
\[
z_j\in\Omega^{(-)}(\varepsilon_{\alpha_j};\eta_{\alpha_j}),\qquad
j=1,\dots,n.
\]
Moreover,
$
|z_j|<\varepsilon_{\alpha_j},\,\, j=1,\dots,n,
$
imply that
\[
z_j\in\Omega^{(-)}(\varepsilon_{\alpha_j};\eta_{\alpha_j}),\qquad
j=1,\dots,n.
\]
Therefore, by the principle of analytic continuation, the obtained integral representation remains valid throughout
\[
\Omega^{(-)}(\varepsilon_{\alpha_1};\eta_{\alpha_1})
\times
\cdots
\times
\Omega^{(-)}(\varepsilon_{\alpha_n};\eta_{\alpha_n}),
\]
which completes the proof.
\end{proof}    

\begin{lemma}\label{l5.2}
    Let $0 < \alpha_1, \alpha_2,\dots,\alpha_n < 2$ and $\prod\limits_{{j=1}}^n\alpha_j < 2$. Let $\alpha_0$ be any complex number and let $\eta$ satisfy the condition \eqref{5.1}.
If
$
z_j \in \Omega^{(+)}({\varepsilon}_{\alpha_j};\eta_{\alpha_j})
$,
where
$
{\varepsilon}_{\alpha_j}={\varepsilon}^{\frac{1}{\beta_j}}$ and $\eta_{\alpha_j}=\cfrac{\eta}{\beta_j}$, $j=1,\dots,n,$
then the following Hankel integral representation holds:
$$\begin{aligned}
E_{\alpha_1, \dots, \alpha_n }(z_1, \dots, z_n;\mu)
&=
\sum_{j=1}^{n}
\frac{1}{\alpha_j}\,
\frac{
e^{z_j^{1/\alpha_j}}
\,z_j^{\frac{\gamma_j+1-\mu}{\alpha_j}}
}{
\prod\limits_{\substack{i=1\\i\neq j}}^{n}
\left(z_j^{\alpha_i/\alpha_j}-z_i\right)
}\\&+
\frac{1}{2\pi i}\,
\frac{1}{\prod\limits_{{j=1}}^n\alpha_j}
\int_{\omega(\varepsilon;\eta)}
\cfrac{
e^{s^{1/\prod\limits_{{j=1}}^n\alpha_j}}
\,s^{\frac{\sum\limits_{j=1}^n\alpha_j+1-\mu}{\prod\limits_{j=1}^n\alpha_j}-1}
}
{
\prod\limits_{j=1}^n(s^{1/\beta_j}-z_j)
}
\,ds.
\end{aligned}$$

\end{lemma}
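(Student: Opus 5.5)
The plan is to follow the proof of Lemma \ref{l3.2}, replacing the three-step argument by an induction on the number $k$ of variables that lie to the right of their contours. The claim to prove for each $k=0,1,\dots,n$ is this. If $z_{j_1},\dots,z_{j_k}\in\Omega^{(+)}(\varepsilon_{\alpha_{j_i}};\eta_{\alpha_{j_i}})$ and the remaining $z_j$ lie in $\Omega^{(-)}(\varepsilon_{\alpha_j};\eta_{\alpha_j})$, then $E_{\alpha_1,\dots,\alpha_n}(z_1,\dots,z_n;\mu)$ equals the contour integral in \eqref{multi variable integral representation} plus the residue-type terms
$$\frac{1}{\alpha_{j}}\frac{e^{z_{j}^{1/\alpha_{j}}}z_{j}^{\frac{\gamma_{j}+1-\mu}{\alpha_{j}}}}{\prod_{i\neq j}(z_{j}^{\alpha_i/\alpha_{j}}-z_i)},\qquad j\in\{j_1,\dots,j_k\}.$$
The case $k=0$ is exactly Lemma \ref{l5.1}, and the case $k=n$ is the statement of Lemma \ref{l5.2}.

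For the inductive step, suppose the formula holds with $z_{j_1},\dots,z_{j_{k-1}}$ on the right, and let $l=j_k$ be the next variable moved to the right. I would isolate the variable $z_l$ by the substitution $s=t^{\beta_l}$. This maps $\omega(\varepsilon;\eta)$ onto $\omega(\varepsilon_{\alpha_l};\eta_{\alpha_l})$, since $\varepsilon^{1/\beta_l}=\varepsilon_{\alpha_l}$ and $\eta/\beta_l=\eta_{\alpha_l}$. After this substitution the integral takes the form
$$\frac{1}{2\pi i}\frac{1}{\alpha_l}\int_{\omega(\varepsilon_{\alpha_l};\eta_{\alpha_l})}\frac{e^{t^{1/\alpha_l}}t^{\frac{\gamma_l+1-\mu}{\alpha_l}}}{(t-z_l)\prod_{i\ne l}(t^{\alpha_i/\alpha_l}-z_i)}\,dt,$$
which is the analogue of \eqref{5.6} with $n$ replaced by $l$. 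Checking the exponent is routine: one uses $\beta_l/\beta_i=\alpha_i/\alpha_l$ and $\sum_j\alpha_j-\alpha_l=\gamma_l$.

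Next, pick $\widetilde\varepsilon_{\alpha_l}>|z_l|$ and set $\widetilde\varepsilon_{\alpha_i}=\widetilde\varepsilon_{\alpha_l}^{\alpha_i/\alpha_l}$. On the contour $\omega(\widetilde\varepsilon_{\alpha_l};\eta_{\alpha_l})$ the integrand is analytic in all the variables for $z$ in the enlarged left regions, so by the previous step and analytic continuation the representation holds with the enlarged contour. The difference between the integrals over $\omega(\widetilde\varepsilon_{\alpha_l};\eta_{\alpha_l})$ and $\omega(\varepsilon_{\alpha_l};\eta_{\alpha_l})$ encloses the bounded region $\varepsilon_{\alpha_l}<|t|<\widetilde\varepsilon_{\alpha_l}$, $|\arg t|<\eta_{\alpha_l}$. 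This region contains $z_l$, and by Cauchy's theorem the difference equals the residue at $t=z_l$, namely
$$\frac{1}{\alpha_l}\frac{e^{z_l^{1/\alpha_l}}z_l^{\frac{\gamma_l+1-\mu}{\alpha_l}}}{\prod_{i\ne l}(z_l^{\alpha_i/\alpha_l}-z_i)}.$$
Undoing the substitution then returns the integral over $\omega(\varepsilon;\eta)$ and completes the step. Iterating over all $l$ gives the full sum over $j$.

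The main obstacle is justifying that $t=z_l$ is the only singularity crossed when the contour is moved. The other factors $t^{\alpha_i/\alpha_l}-z_i$ could vanish inside the swept region. To handle this I would first assume the points are generic, meaning the $z_i$ with $i\ne l$ stay off the images $\{t^{\alpha_i/\alpha_l}\}$ of that region; this is the same tacit assumption made in Lemma \ref{l3.2}. I would then extend the identity by analytic continuation in the remaining variables, noting that both sides are meromorphic with matching poles on $z_j^{\alpha_i/\alpha_j}=z_i$. I would also check that the argument condition $|\arg z_l|<\eta_{\alpha_l}\le\pi$ keeps $z_l$ inside the region where the branch $t^{1/\alpha_l}$ is used consistently.
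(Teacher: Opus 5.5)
Your proposal is correct and is essentially the paper's argument. The paper also moves $z_n,z_{n-1},\dots,z_1$ across one at a time, each time substituting $s=t^{\beta_l}$, enlarging the contour past $z_l$ and collecting the residue at $t=z_l$ by Cauchy's theorem, exactly as in Lemma \ref{l3.2}; your handling of the other poles is more careful than the paper's, but note that your ``genericity'' condition (each previously moved $z_i$ with $|z_i|^{\beta_i}>|z_l|^{\beta_l}$, so it stays right of the enlarged contour) is an open condition rather than a generic one, and the identity theorem on the connected domain is what extends the result — with the apparent poles on $z_j^{\alpha_i/\alpha_j}=z_i$ removable rather than ``matching'', since the left side is entire.
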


\begin{proof}
    First, suppose that only the point $z_n$ lies to the right of the Hankel contour
$\omega(\varepsilon_{\alpha_n};\eta_{\alpha_n})$, namely,
$
z_n\in\Omega^{(+)}(\varepsilon_{\alpha_n};\eta_{\alpha_n}).
$
Choose an arbitrary number $\widetilde{\varepsilon}_{\alpha_n}$ satisfying
$\widetilde{\varepsilon}_{\alpha_n}>|z_n|$. Then
$
z_n\in\Omega^{(-)}(\widetilde{\varepsilon}_{\alpha_n};\eta_{\alpha_n}).
$
Furthermore, setting
$
\widetilde{\varepsilon}_{\alpha_1}
=\widetilde{\varepsilon}_{\alpha_n}^{\frac{\alpha_1}{\alpha_n}},
$\dots$,
\widetilde{\varepsilon}_{\alpha_{n-1}}
=\widetilde{\varepsilon}_{\alpha_n}^{\frac{\alpha_{n-1}}{\alpha_n}},
$
it follows that
$
z_j\in\Omega^{(-)}(\widetilde{\varepsilon}_{\alpha_j};\eta_{\alpha_j}),
\,\,
j=1,\dots,n-1.
$
Hence, applying the integral representation \eqref{5.6}, we arrive at
\begin{equation}\label{5.7}E_{\alpha_1,\dots,\alpha_n}(z_1,\dots,z_n;\mu)=
\frac{1}{2\pi i}\,
\frac{1}{\alpha_n}
\int_{\omega(\widetilde{\varepsilon}_{\alpha_n};\eta_{\alpha_n})}
\frac{
e^{s^{1/\alpha_n}}
\,s^{\frac{\gamma_n+1-\mu}{\alpha_n}}
}
{
\left(\prod\limits_{j=1}^{n-1}(s^{\alpha_j/\alpha_n}-z_j)\right)(s-z_n)
}
\,ds.\end{equation}

Moreover, if
$
\varepsilon_{\alpha_n}<|z_n|<\widetilde{\varepsilon}_{\alpha_n},
$
then
$
|\arg z_n|<\eta_{\alpha_n},
$
and, by Cauchy's theorem,
\begin{equation}\label{5.8}\begin{aligned}
E_{\alpha_1,\dots,\alpha_n}(z_1,\dots,z_n;\mu)&=
\frac{1}{2\pi i}\,
\frac{1}{\alpha_n}
\int_{\omega(\widetilde{\varepsilon}_{\alpha_n};\eta_{\alpha_n})-\omega(\varepsilon_{\alpha_n};\eta_{\alpha_n})}
\frac{
e^{s^{1/\alpha_n}}
\,s^{\frac{\gamma_n+1-\mu}{\alpha_n}}
}
{
\left(\prod\limits_{j=1}^{n-1}(s^{\alpha_j/\alpha_n}-z_j)\right)(s-z_n)
}
\,ds\\
&=\frac{1}{\alpha_n}
\frac{
e^{z_n^{1/\alpha_n}}
\,z_n^{\frac{\gamma_n+1-\mu}{\alpha_n}}}
{\prod\limits_{j=1}^{n-1}(z_n^{\alpha_j/\alpha_n}-z_j)}.\end{aligned}
\end{equation}

Therefore, from \eqref{5.7} and \eqref{5.8}, we obtain the following integral representation 
\begin{equation}\label{5.9}\begin{aligned}
E_{\alpha_1,\dots,\alpha_n}(z_1,\dots,z_n;\mu)
&=\frac{1}{\alpha_n}
\frac{
e^{z_n^{1/\alpha_n}}
\,z_n^{\frac{\gamma_n+1-\mu}{\alpha_n}}}
{\prod\limits_{j=1}^{n-1}(z_n^{\alpha_j/\alpha_n}-z_j)}\\
&+
\frac{1}{2\pi i}\,
\frac{1}{\prod\limits_{{j=1}}^n\alpha_j}
\int_{\omega(\varepsilon;\eta)}
\cfrac{
e^{s^{1/\prod\limits_{{j=1}}^n\alpha_j}}
\,s^{\frac{\sum\limits_{j=1}^n\alpha_j+1-\mu}{\prod\limits_{j=1}^n\alpha_j}-1}
}
{
\prod\limits_{j=1}^n(s^{1/\beta_j}-z_j)
}
\,ds.\end{aligned}\end{equation}

Continuation of the proof of Lemma~\ref{l5.2}. Following the same arguments as in the proof of Lemma~\ref{l3.2}, we successively assume that the points $z_{n-1},\ldots,z_1$ lie to the right of the Hankel contour. At each step, only one additional residue is produced, while the remaining integral preserves the same structure with one fewer variable lying to the right of the Hankel contour. Repeating this argument successively yields the statement of Lemma~\ref{l5.2}.

\end{proof}

\begin{theorem}\label{thm5.3}
    Let $0 < \alpha_1, \alpha_2,\dots,\alpha_n < 2$ and $\prod\limits_{{j=1}}^n\alpha_j < 2$. Let $\alpha_0$ be any complex number and
$\tau$ be any real number satisfying the inequalities
$$
\frac{\pi}{2}\prod_{{j=1}}^n\alpha_j
<
\tau
<
\min\!\Bigl(\pi,\pi\prod_{{j=1}}^n\alpha_j\Bigr).
$$

Then, for all integers $r_j\geq 1$, $j=1,\dots,n,$ the function
$E_{\alpha_1, \dots, \alpha_n }(z_1, \dots, z_n;\mu)$ verifies the following asymptotic formulas
drawn from its integral representations whenever $|z_j|\to\infty$, $j=1,\dots,n$.
\begin{itemize}
    \item If $\tau/\beta_j \leq |\arg z_j| \leq \pi$, where $j=1,\dots,n$, then
$$\begin{aligned}E_{\alpha_1, \dots, \alpha_n }(z_1, \dots, z_n;\mu)
&=(-1)^n\sum_{k_1=1}^{r_{1}}
\sum_{k_2=1}^{r_{2}}\dots
\sum_{k_n=1}^{r_{n}}
\frac{\prod\limits_{j=1}^nz_j^{-k_j}}{\Gamma(\mu-\sum_{j=1}^nk_j\alpha_j)}\\
&+
\sum_{j=1}^{n}
o\!\left(
|z_j|^{-r_j}
\prod_{p=1}^{n}|z_p|^{-1}
\right).\end{aligned}
$$

\item If $ |\arg z_j| \leq \tau/\beta_j $, where $j=1,\dots,n$, then

$$\begin{aligned}
E_{\alpha_1, \dots, \alpha_n }&(z_1, \dots, z_n;\mu)
=
\sum_{j=1}^{n}
\frac{1}{\alpha_j}\,
\frac{
e^{z_j^{1/\alpha_j}}
\,z_j^{\frac{\gamma_j+1-\mu}{\alpha_j}}
}{
\prod\limits_{\substack{i=1\\i\neq j}}^{n}
\left(z_j^{\alpha_i/\alpha_j}-z_i\right)
}\\
&+(-1)^n\sum_{k_1=1}^{r_{1}}
\sum_{k_2=1}^{r_{2}}\dots
\sum_{k_n=1}^{r_{n}}
\frac{\prod\limits_{j=1}^nz_j^{-k_j}}{\Gamma\left(\mu-\sum\limits_{j=1}^nk_j\alpha_j\right)}+\sum_{j=1}^no\!\left(|z_j|^{-r_{j}}\Bigl|\prod_{p=1}^nz_p\Bigr|^{-1}\right).\end{aligned}$$
\end{itemize}
\end{theorem}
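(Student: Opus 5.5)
The argument follows the proof of Theorem \ref{thm4.1}, with the three-variable identity \eqref{3 variable denominator} replaced by its $n$-variable analogue. Write $A=\prod_{j=1}^n\alpha_j$ and $w_j=s^{1/\beta_j}$. The first step is a finite geometric expansion of each factor of the denominator in \eqref{multi variable integral representation}:
\[
\frac{1}{w_j-z_j}=-\sum_{k=1}^{r_j}\frac{w_j^{k-1}}{z_j^{k}}+\frac{w_j^{r_j}}{z_j^{r_j}(w_j-z_j)}.
\]
Write this as $\frac{1}{w_j-z_j}=P_j+R_j$, with $P_j$ the finite sum and $R_j$ the remainder. The product over $j$ expands into $2^n$ terms $\prod_{j\in J}R_j\prod_{j\notin J}P_j$, indexed by $J\subseteq\{1,\dots,n\}$.

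The term $J=\emptyset$ is
\[
(-1)^n\sum_{k_1,\dots,k_n}\prod_j z_j^{-k_j}\,s^{\sum_j(k_j-1)/\beta_j}.
\]
Insert it into the integral over $\omega(1;\theta)$; Lemma \ref{l5.1} lets us take $\varepsilon=1$. Use $\sum_j 1/\beta_j=\sum_j\alpha_j/A$, so the exponent of $s$ becomes $\frac{1-(\mu-\sum_j k_j\alpha_j)}{A}-1$. Formula \eqref{gamma integral representation} with $b=A$ then turns this term into the main sum $(-1)^n\sum\prod_j z_j^{-k_j}/\Gamma(\mu-\sum_j k_j\alpha_j)$. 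This also settles \eqref{3 variable denominator} in the case $n=3$, since that identity is just the expansion of $\prod_j(P_j+R_j)$ regrouped.

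Every remaining term, with $J\neq\emptyset$, is estimated directly. A factor $P_j$ is $O(|z_j|^{-1})$ times a power of $|s|$, uniformly for $|s|$ bounded below; this uses $|z_j|\to\infty$ together with the bound $|w_j|^{k-1}\le |s|^{(r_j-1)/\beta_j}$. A factor $R_j$ is bounded by $|w_j|^{r_j}|z_j|^{-r_j}/|w_j-z_j|$.

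For the denominators, first take the sector $\tau/\beta_j\le|\arg z_j|\le\pi$ and choose $\frac{\pi A}{2}<\theta<\tau$. The map $s\mapsto s^{1/\beta_j}$ sends $\omega(1;\theta)$ into the closed sector $|\arg w|\le\theta/\beta_j$. For large $|z_j|$ this gives the lower bound $|w_j-z_j|\ge c_j|z_j|$ with $c_j=\sin\bigl((\tau-\theta)/\beta_j\bigr)$. One checks that $(\tau-\theta)/\beta_j<\pi$ automatically, and if the angle exceeds $\pi/2$ one replaces $\sin$ by $1$.

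Combining these bounds, the term indexed by $J$ is at most
\[
C\prod_{j\in J}|z_j|^{-r_j-1}\prod_{j\notin J}|z_j|^{-1}
\]
times $\int_{\omega(1;\theta)}|e^{s^{1/A}}||s|^{\kappa}|ds|$ for some real $\kappa$. This integral converges because on the rays $|e^{s^{1/A}}|=\exp(|s|^{1/A}\cos(\theta/A))$, and $\cos(\theta/A)<0$ since $\pi/2<\theta/A<\pi$. For $|J|=1$, $J=\{j\}$, the term is $O(|z_j|^{-r_j}\prod_p|z_p|^{-1})$. For $|J|\ge2$ it is smaller by further negative powers of other $|z_i|$. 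Replacing $r_j$ by $r_j+1$ in the expansion and then discarding the extra main-sum terms, which are themselves of the order claimed, upgrades $O$ to $o$. This proves the first bullet.

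For the second bullet, take $|\arg z_j|\le\tau/\beta_j$ and choose $\tau<\theta\le\min(\pi,\pi A)$. Start instead from Lemma \ref{l5.2}, whose residue sum $\sum_j\frac{1}{\alpha_j}\,\frac{e^{z_j^{1/\alpha_j}}z_j^{(\gamma_j+1-\mu)/\alpha_j}}{\prod_{i\ne j}(z_j^{\alpha_i/\alpha_j}-z_i)}$ appears verbatim in the statement. The remaining integral is handled exactly as above, now with the lower bound $|w_j-z_j|\ge|z_j|\sin\bigl((\theta-\tau)/\beta_j\bigr)$. Lemma \ref{l5.2} requires $z_j\in\Omega^{(+)}$ for every $j$; this holds for large $|z_j|$ in this sector because $\tau/\beta_j<\theta/\beta_j=\eta_{\alpha_j}$.

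The main obstacle I expect is the uniform lower bound on $|s^{1/\beta_j}-z_j|$ along the whole contour, including the arc near $|s|=1$, together with the condition that each angle difference stays in $(0,\pi)$, so that the sine constants are positive. Verifying $\theta/\beta_j\le\pi$, or handling the case where it fails through the branch convention for $s^{1/\beta_j}$, is the point I would check first.
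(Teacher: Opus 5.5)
Your route is the paper's own. You use the truncated geometric expansion of each factor $1/(s^{1/\beta_j}-z_j)$ and expand the product over subsets. The empty-subset term becomes the main sum via \eqref{gamma integral representation} with $b=\prod_j\alpha_j$. The other terms are bounded using $|s^{1/\beta_j}-z_j|\ge c_j|z_j|$ and the exponential decay of the kernel on the rays, and Lemma \ref{l5.2} handles the second sector. All of this is sound. It proves both bullets with every $o(\cdot)$ replaced by $O(\cdot)$, and the $O$ comes precisely from the terms with $|J|=1$.

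\textbf{The step that fails is your upgrade from $O$ to $o$.} After replacing each $r_j$ by $r_j+1$, the new remainder is indeed $o\bigl(|z_j|^{-r_j}\prod_p|z_p|^{-1}\bigr)$. The problem is the main-sum terms you then discard. For each $j$ they include the one with $k_j=r_j+1$ and $k_p=1$ for $p\neq j$:
\[
(-1)^n\,\frac{z_j^{-r_j-1}\prod_{p\neq j}z_p^{-1}}{\Gamma\bigl(\mu-\sum_{p=1}^n\alpha_p-r_j\alpha_j\bigr)} .
\]
Its modulus is a fixed constant times $|z_j|^{-r_j}\prod_p|z_p|^{-1}$. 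So it is $O$ of that quantity but not $o$, unless the reciprocal Gamma factor happens to vanish.

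No other argument can close this, because the $o$-claim is false. Take $n=1$: the first bullet then asserts
\[
E_{\alpha}(z;\mu)+\sum_{k=1}^{r}\frac{z^{-k}}{\Gamma(\mu-k\alpha)}=o(|z|^{-r-1}).
\]
But the classical expansion with $r+1$ terms (\cite{Podlubny}, Theorem 1.4) shows the left-hand side equals $-z^{-r-1}/\Gamma(\mu-(r+1)\alpha)+O(|z|^{-r-2})$. So the statement as written is false in general, and the correct, provable version has $O$ in place of $o$. The paper's proof has the same defect, without your attempted repair. It bounds the terms of \eqref{5.18} by a constant times $\sum_j|z_j|^{-r_j}\prod_p|z_p|^{-1}$ and then simply writes $o$. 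Theorem \ref{thm4.1} does the same: $I_1$ is shown to be $O$ and then declared $o$.

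On the obstacle you flag, the answer differs between the two sectors.
In the first sector it causes no trouble. That sector is nonempty only if $\tau\le\pi\beta_j$, so $\theta/\beta_j<\tau/\beta_j\le\pi$, and your sine bound holds on both the rays and the arc.
In the second sector, the requirement $\theta\le\pi\beta_j$ for all $j$ is not implied by the hypotheses. It can fail when some $\alpha_j>1$ while $\prod_{i\neq j}\alpha_i<1$, because then $\tau\in[\pi\beta_j,\min(\pi,\pi\prod_i\alpha_i))$ is allowed. The contours $\omega(\varepsilon_{\alpha_j};\eta_{\alpha_j})$ in Lemmas \ref{l5.1}--\ref{l5.2} silently require $\eta_{\alpha_j}\le\pi$. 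So this is a gap inherited from the paper, not one your argument creates.
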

\begin{proof}
The denominator appearing in the integral representation of the multivariate Mittag--Leffler function (see Lemma \ref{l5.1}) can be decomposed as follows:
\begin{equation}\label{5.10}
\frac{
1}
{
\prod\limits_{j=1}^n(s^{1/\beta_j}-z_j)
}=\frac{
1}
{
\prod\limits_{j=1}^n(A_j-z_j)
}.
\end{equation}
For all integers $r\geq 1$, the following identity holds:
\begin{equation}\label{5.11}
\frac{1}{A-z}=-\sum_{k=1}^r\frac{A^{k-1}}{z^k}+\frac{A^r}{z^r(A-z)}.
\end{equation}
Therefore, \eqref{5.10} can be rewritten in the following form:
\begin{equation}\label{5.12}
\frac{
1}
{
\prod\limits_{j=1}^n(A_j-z_j)
}=\prod_{j=1}^n\left(-\sum_{k_j=1}^{r_j}\frac{A_j^{k_j-1}}{z_j^{k_j}}+\frac{A_j^{r_j}}{z_j^{r_j}(A_j-z_j)}\right).
\end{equation}
For convenience, let
\[
B_j:=
-\sum_{k_j=1}^{r_j}
\frac{A_j^{k_j-1}}{z_j^{k_j}},
\qquad
C_j:=
\frac{A_j^{r_j}}
{z_j^{r_j}(A_j-z_j)},
\quad j=1,\ldots,n.
\]
Then we obtain
\begin{equation}\label{5.13}
\prod_{j=1}^{n}(B_j+C_j)
=
\sum_{I\subseteq[n]}
\left(
\prod_{j\in I}C_j
\right)
\left(
\prod_{j\notin I}B_j
\right).
\end{equation}
Here, $[n]:=\{1,2,\ldots,n\}$, and the summation is taken over all subsets
$I\subseteq[n]$. For each subset $I$, the factors $C_j$ are chosen for
$j\in I$, while the factors $B_j$ are chosen for $j\notin I$.

For example, when $n=3$, we have
$
[3]=\{1,2,3\},
$
whose subsets are
\[
\begin{array}{c|c}
I & \displaystyle\prod_{j\in I}C_j\prod_{j\notin I}B_j\\
\hline
\varnothing & B_1B_2B_3\\
\{1\} & C_1B_2B_3\\
\{2\} & B_1C_2B_3\\
\{3\} & B_1B_2C_3\\
\{1,2\} & C_1C_2B_3\\
\{1,3\} & C_1B_2C_3\\
\{2,3\} & B_1C_2C_3\\
\{1,2,3\} & C_1C_2C_3.
\end{array}
\]
Therefore,
\begin{equation}\label{5.14}
\begin{aligned}
(B_1+C_1)(B_2+C_2)(B_3+C_3)
&=
B_1B_2B_3
+C_1B_2B_3
+B_1C_2B_3
+B_1B_2C_3\\
&
+C_1C_2B_3
+C_1B_2C_3
+B_1C_2C_3
+C_1C_2C_3.
\end{aligned}
\end{equation}
Using \eqref{5.11}, all factors $B_j$ appearing outside the first summation are rewritten as
\begin{equation}\label{5.15}
B_j=
-\sum_{k_j=1}^{r_j}
\frac{A_j^{k_j-1}}{z_j^{k_j}}
=
\frac{z_j^{r_j}-A_j^{r_j}}
{z_j^{r_j}(A_j-z_j)},\quad j=1,\dots,n.
\end{equation}
Combining \eqref{5.12}, \eqref{5.13} (or equivalently, \eqref{5.14}) and \eqref{5.15}, followed by straightforward algebraic manipulations, yields the decomposition \eqref{3 variable denominator}.

Let us
choose a real number $\theta$ satisfying
\begin{equation}\label{5.16}
\frac{\pi}{2}\prod_{{j=1}}^n\alpha_j
<\theta<
\tau
<
\min\!\Bigl(\pi,\pi\prod_{{j=1}}^n\alpha_j\Bigr).
\end{equation}

Since the integral representation of Lemma \ref{l5.1} is valid for every $\varepsilon>0$, we may choose $\varepsilon=1$ without loss of generality. Substituting the \eqref{5.13} into \eqref{multi variable integral representation}, we obtain
  $$E_{\alpha_1, \dots, \alpha_n }(z_1, \dots, z_n;\mu)$$
  $$=(-1)^n\sum_{k_1=1}^{r_{1}}
\sum_{k_2=1}^{r_{2}}\dots
\sum_{k_n=1}^{r_{n}}
\prod_{j=1}^nz_j^{-k_j}
\left(\int_{\omega(1;\theta)}
e^{s^{1/\prod\limits_{{j=1}}^n\alpha_j}}
\,s^{\frac{\sum\limits_{j=1}^n\alpha_j+1-\mu}{\prod\limits_{j=1}^n\alpha_j}-1}s^{\sum\limits_{j=1}^n\frac{k_j-1}{\beta_j}}ds\right)$$
$$
+\frac{1}{2\pi i}\,
\frac{1}{\prod\limits_{{j=1}}^n\alpha_j}\int_{\omega(1;\theta)}
e^{s^{1/\prod\limits_{{j=1}}^n\alpha_j}}
\,s^{\frac{\sum\limits_{j=1}^n\alpha_j+1-\mu}{\prod\limits_{j=1}^n\alpha_j}-1}$$$$\times\sum_{\varnothing\neq I\subseteq[n]}
\left(
\prod_{j\in I}\frac{s^{\frac{r_j}{\beta_j}}}
{z_j^{r_j}\left(s^{\frac{1}{\beta_j}}-z_j\right)}
\right)
\left(
\prod_{j\notin I}\frac{z_j^{r_j}-s^{\frac{r_j}{\beta_j}}}
{z_j^{r_j}\left(s^{\frac{1}{\beta_j}}-z_j\right)}
\right)ds.
$$
By simplifying the first expression, we obtain
$$(-1)^n\sum_{k_1=1}^{r_{1}}
\sum_{k_2=1}^{r_{2}}\dots
\sum_{k_n=1}^{r_{n}}
\prod_{j=1}^nz_j^{-k_j}
\left(\int_{\omega(1;\theta)}
e^{s^{1/\prod\limits_{{j=1}}^n\alpha_j}}
\,s^{\frac{1-\left(\mu-\sum\limits_{j=1}^nk_j\alpha_j\right)}{\prod\limits_{j=1}^n\alpha_j}-1}ds\right).$$
Using the \eqref{gamma integral representation}, we get
$$E_{\alpha_1, \dots, \alpha_n }(z_1, \dots, z_n;\mu)
=(-1)^n\sum_{k_1=1}^{r_{1}}
\sum_{k_2=1}^{r_{2}}\dots
\sum_{k_n=1}^{r_{n}}
\frac{\prod\limits_{j=1}^nz_j^{-k_j}}{\Gamma\left(\mu-\sum\limits_{j=1}^nk_j\alpha_j\right)}+M.$$
where
$$M=\frac{1}{2\pi i}\,
\frac{1}{\prod\limits_{{j=1}}^n\alpha_j}\int_{\omega(1;\theta)}
e^{s^{1/\prod\limits_{{j=1}}^n\alpha_j}}
\,s^{\frac{\sum\limits_{j=1}^n\alpha_j+1-\mu}{\prod\limits_{j=1}^n\alpha_j}-1}$$
$$\times\sum_{\varnothing\neq I\subseteq[n]}
\left(
\prod_{j\in I}\frac{s^{\frac{r_j}{\beta_j}}}
{z_j^{r_j}\left(s^{\frac{1}{\beta_j}}-z_j\right)}
\right)
\left(
\prod_{j\notin I}\frac{z_j^{r_j}-s^{\frac{r_j}{\beta_j}}}
{z_j^{r_j}\left(s^{\frac{1}{\beta_j}}-z_j\right)}
\right)ds.$$
The summation inside the integral can be rewritten in the following form:
$$\sum_{\varnothing\neq I\subseteq[n]}
\left(
\prod_{j\in I}\frac{s^{\frac{r_j}{\beta_j}}}
{z_j^{r_j}\left(s^{\frac{1}{\beta_j}}-z_j\right)}
\right)
\left(
\prod_{j\notin I}\frac{z_j^{r_j}-s^{\frac{r_j}{\beta_j}}}
{z_j^{r_j}\left(s^{\frac{1}{\beta_j}}-z_j\right)}
\right)=$$
\begin{equation}\label{5.18}
    \frac{s^{\frac{r_1}{\beta_1}}\prod\limits_{j=2}^n\left(z_j^{r_j}-s^{\frac{r_j}{\beta_j}}\right)+\dots+s^{\frac{r_i}{\beta_i}}\prod\limits_{\substack{j=1\\i\neq j}}^{n}\left(z_j^{r_j}-s^{\frac{r_j}{\beta_j}}\right)+\dots+s^{\frac{r_n}{\beta_n}}\prod\limits_{j=1}^{n-1}\left(z_j^{r_j}-s^{\frac{r_j}{\beta_j}}\right)}{\prod\limits_{j=1}^nz_j^{r_j}\left(s^{\frac{1}{\beta_j}}-z_j\right)}\end{equation}
\begin{equation}\label{5.19}
    +\frac{\sum\limits_{K\subseteq[n]}
\left(
\prod\limits_{j\in K}s^{\frac{r_j}{\beta_j}}
\right)
\left(
\prod\limits_{j\notin K}\left(z_j^{r_j}-s^{\frac{r_j}{\beta_j}}\right)
\right)}{\prod\limits_{j=1}^nz_j^{r_j}\left(s^{\frac{1}{\beta_j}}-z_j\right)},\end{equation}
where $K=I/\left\{\{\varnothing\},\{1\},\{2\},\dots,\{n\}\right\}$. 

Substituting \eqref{5.18} and \eqref{5.19} into the integral, we obtain

\begin{equation}\label{5.20}M=\frac{1}{2\pi i}\,
\frac{1}{\prod\limits_{{j=1}}^n\alpha_j}\int_{\omega(1;\theta)}
e^{s^{1/\prod\limits_{{j=1}}^n\alpha_j}}
\,s^{\frac{\sum\limits_{j=1}^n\alpha_j+1-\mu}{\prod\limits_{j=1}^n\alpha_j}-1}\end{equation}
$$\times
\left(\frac{s^{\frac{r_1}{\beta_1}}\prod\limits_{j=2}^n\left(z_j^{r_j}-s^{\frac{r_j}{\beta_j}}\right)+\dots+s^{\frac{r_i}{\beta_i}}\prod\limits_{\substack{j=1\\i\neq j}}^{n}\left(z_j^{r_j}-s^{\frac{r_j}{\beta_j}}\right)+\dots+s^{\frac{r_n}{\beta_n}}\prod\limits_{j=1}^{n-1}\left(z_j^{r_j}-s^{\frac{r_j}{\beta_j}}\right)}{\prod\limits_{j=1}^nz_j^{r_j}\left(s^{\frac{1}{\beta_j}}-z_j\right)}\right.$$
$$\left.+\frac{\sum\limits_{K\subseteq[n]}
\left(
\prod\limits_{j\in K}s^{\frac{r_j}{\beta_j}}
\right)
\left(
\prod\limits_{j\notin K}\left(z_j^{r_j}-s^{\frac{r_j}{\beta_j}}\right)
\right)}{\prod\limits_{j=1}^nz_j^{r_j}\left(s^{\frac{1}{\beta_j}}-z_j\right)}\right).$$

Provided that
$
\cfrac{\tau}{\beta_j}\leq|\arg z_j|\le\pi,
$
for sufficiently large $|z_j|$, one has
\begin{equation}\label{5.21}
\min_{s\in\omega(1;\theta)}
\left|s^{1/\beta_j}-z_j\right|
= |z_j|
\sin\left(\cfrac{\tau}{\beta_j}-\cfrac{\theta}{\beta_j}\right)=|z_j|\sin\left(\cfrac{\tau-\theta}{\beta_j}\right),\,\,\, j=1,\dots,n.
\end{equation}

Using \eqref{5.21} together with the following inequality
$$\left|\sum\limits_{m=1}^ns^{\frac{r_m}{\beta_m}}
\prod\limits_{\substack{j=1\\j\neq m}}^n
\left(
z_j^{r_j}-s^{\frac{r_j}{\beta_j}}
\right)
\right|
\le
\sum\limits_{m=1}^n\left|s\right|^{\frac{r_m}{\beta_m}}\prod\limits_{\substack{j=1\\j\neq m}}^n
\left(
|z_j|^{r_j}
+
|s|^{\frac{r_j}{\beta_j}}
\right)$$
we get 
$$|M|\le
\frac{C}{2\pi \prod\limits_{{j=1}}^n\alpha_j}\left[\sum_{j=1}^n|z_j|^{-r_j}\Bigg|\prod_{p=1}^nz_p\Bigg|^{-1}\int_{\omega(1;\theta)}
\left|e^{s^{1/\prod\limits_{{j=1}}^n\alpha_j}}
\right|\left|s^{\frac{\sum\limits_{j=1}^n\alpha_j+1-\mu}{\prod\limits_{j=1}^n\alpha_j}-1}\right|\left|s\right|^{\frac{r_j}{\beta_j}}\left|ds\right|\right.$$
$$
+\Bigg|\prod_{p=1}^nz_p\Bigg|^{-1}\int_{\omega(1;\theta)}
\left|e^{s^{1/\prod\limits_{{j=1}}^n\alpha_j}}
\right|\left|s^{\frac{\sum\limits_{j=1}^n\alpha_j+1-\mu}{\prod\limits_{j=1}^n\alpha_j}-1}\right|\left|\frac{\sum\limits_{m=1}^n\left|s\right|^{\frac{r_m}{\beta_m}}\prod\limits_{\substack{j=1\\j\neq m}}^n\left|s\right|^{\frac{r_j}{\beta_j}}}{\prod\limits_{j=1}^nz_j^{r_j}}\right|\left|ds\right|
$$
$$+\left.\Bigg|\prod_{p=1}^nz_p\Bigg|^{-1}\int_{\omega(1;\theta)}\left|e^{s^{1/\prod\limits_{{j=1}}^n\alpha_j}}
\right|\left|s^{\frac{\sum\limits_{j=1}^n\alpha_j+1-\mu}{\prod\limits_{j=1}^n\alpha_j}-1}\right|\left|\frac{\sum \limits_{K\subseteq[n]}
\left(
\prod\limits_{j\in K}s^{\frac{r_j}{\beta_j}}
\right)
\left(
\prod\limits_{j\notin K}\left(z_j^{r_j}-s^{\frac{r_j}{\beta_j}}\right)
\right)}{\prod\limits_{j=1}^nz_j^{r_j}}\right||ds|\right],
$$
where $C>0$ is a constant.

Observe that, in \eqref{5.19}, the set $K$ contains at least two indices. Therefore, in every summand, at least two factors of the form
$
s^{\frac{r_j}{\beta_j}}
$
appear, whereas in the terms of \eqref{5.18} only one such factor is present. Consequently, compared with the terms in \eqref{5.18}, at least one factor
$
z_j^{r_j}-s^{\frac{r_j}{\beta_j}}
$
is replaced by the corresponding power
$
s^{\frac{r_j}{\beta_j}}.
$
After division by
$
\prod\limits_{j=1}^{n}z_j^{r_j},
$
every such replacement produces one additional factor of the form
$
z_j^{-r_j}.
$
Hence, each summand in \eqref{5.19} contains at least one extra negative power of one of the variables $z_j$ compared with the corresponding terms in \eqref{5.18}. Therefore, every term in \eqref{5.19} is of strictly smaller order than
\[
\sum_{j=1}^n|z_j|^{-r_j}
\prod_{p=1}^{n}|z_p|^{-1},
\qquad
|z_j|\to\infty,\quad j=1,\ldots,n.
\]
Moreover, the rays defined by
$
S_{\pm\theta}
=
\left\{
\arg s = \pm \theta,\; |s| \ge 1
\right\}
$
belong to the contour $\omega(1;\theta)$, the integral in \eqref{5.20} is convergent; since
\[
\left|e^{s^{1/\prod\limits_{{j=1}}^n\alpha_j}}\right|
=
\exp\!\left(
|s|^{{1/\prod\limits_{{j=1}}^n\alpha_j}}
\cos\!\left(\frac{\theta}{\prod\limits_{{j=1}}^n\alpha_j}\right)
\right).
\]

Now, according to inequalities \eqref{5.16}, we have that $\cos\!\left(\cfrac{\theta}{\prod\limits_{{j=1}}^n\alpha_j}\right)<0$. Thus,
\[
M=
\sum_{j=1}^{n}
o\!\left(
|z_j|^{-r_j}
\prod_{p=1}^{n}|z_p|^{-1}
\right),\qquad |z_j|\to\infty,\qquad j=1,\dots,n.
\]

If $ |\arg z_j| \leq \tau/\beta_j$, using the corresponding integral representations established in Lemma \ref{l5.2}, the proof proceeds in the same way as the above one.

Thus Theorem \ref{thm5.3} is proved. \qed
\end{proof}

\section{Conclusion}

In this paper, a multivariable Mittag--Leffler-type function arising in the theory of fractional differential equations with several fractional parameters has been investigated. New Hankel contour integral representations have been established for the three-variable Mittag--Leffler function. Based on these representations, complete asymptotic expansions have been derived in different sectors of the complex plane.
Furthermore, the obtained results have been extended to the multivariable Mittag--Leffler function of an arbitrary number of variables. The proposed approach provides a unified framework for deriving integral representations and asymptotic formulas for multivariable Mittag--Leffler functions. The obtained asymptotic expansions generalize several previously known results for one- and two-variable Mittag--Leffler functions.
The results presented in this paper provide useful analytical tools for the qualitative analysis of fractional differential equations involving several fractional derivatives. In particular, they may be applied to the investigation of asymptotic behavior, decay estimates, explicit solution representations, and related problems arising in fractional evolution equations. It is expected that the proposed method can also be adapted to other classes of generalized special functions appearing in fractional calculus.

\section*{Acknowledgements}

The author is grateful to professor R. Ashurov for discussions of these results.

\medskip

\begin{tabular}{p{9cm}}
Damir Shamuratov\\
V.I. Romanovskiy Institute of Mathematics Uzbekistan Academy of Sciences, University street 9, Tashkent--100174, Uzbekistan\\
email: damirshamuratov5@gmail.com\\
https://orcid.org/0009-0004-1566-7171
\end{tabular}


\begin{thebibliography}{20}

\bibitem{Ashurov}
{\sc R. Ashurov and D. Shamuratov},
{\it Inverse Problem for a Multi-Term Time-Fractional Diffusion Equation with the Caputo Derivatives},
arXiv:2603.01833, (2026).
https://doi.org/10.48550/arXiv.2603.01833.

\bibitem{Gorenflo}
{\sc R. Gorenflo, A.~A. Kilbas, F. Mainardi and S. Rogosin},
{\it Mittag--Leffler Functions, Related Topics and Applications},
2nd ed.,
Springer Monographs in Mathematics,
Springer, Berlin, 2014.

\bibitem{Kilbas}
{\sc A.~A. Kilbas, H.~M. Srivastava and J.~J. Trujillo},
{\it Theory and Applications of Fractional Differential Equations},
North-Holland Mathematics Studies, Vol.~204,
Elsevier, Amsterdam, 2006.

\bibitem{Kiryakova}
{\sc V. Kiryakova},
{\it The Multi-Index Mittag--Leffler Functions as Important Special Functions of Fractional Calculus},
Comput. Math. Appl.
{\bf 59} (2010), No.~5, 1885--1895.

\bibitem{Lavault}
{\sc C. Lavault},
{\it Integral Representations and Asymptotic Behaviour of a Mittag--Leffler Type Function of Two Variables},
Adv. Oper. Theory
{\bf 3} (2018), No.~2, 40--48.

\bibitem{Yamamoto}
{\sc Z. Li, Y. Liu and M. Yamamoto},
{\it Initial-Boundary Value Problems for Multi-Term Time-Fractional Diffusion Equations with Positive Constant Coefficients},
Appl. Math. Comput.
{\bf 257} (2015), 381--397.

\bibitem{Luchko}
{\sc Yu. Luchko},
{\it Operational Method in Fractional Calculus},
Fract. Calc. Appl. Anal.
{\bf 2} (1999), No.~4, 463--489.

\bibitem{Machado}
{\sc J.~A. Tenreiro Machado and A.~M. Lopes},
{\it Fractional Van der Pol Oscillator},
In:
{\it Handbook of Fractional Calculus with Applications},
Vol.~4,
De Gruyter, Berlin, 2019, 2--21.

\bibitem{Miller}
{\sc K.~S. Miller and B. Ross},
{\it An Introduction to the Fractional Calculus and Fractional Differential Equations},
John Wiley \& Sons,
New York, 1993.

\bibitem{Ogorodnikov1}
{\sc E.~N. Ogorodnikov},
{\it Mathematical Models of the Fractional Oscillator, Setting and Structure of the Cauchy Problem},
Proc. Sixth All-Russian Sci. Conf. with Int. Participation,
Part~1,
Samara State Technical Univ.,
Samara, 2009, 177--181.

\bibitem{Ogorodnikov2}
{\sc E.~N. Ogorodnikov and N.~S. Yashagin},
{\it Setting and Solving of the Cauchy Type Problems for the Second-Order Differential Equations with Riemann--Liouville Fractional Derivatives},
Vestn. Samar. Gos. Tekhn. Univ. Ser. Fiz.-Mat. Nauki
{\bf 20} (2010), No.~1, 24--36.

\bibitem{Podlubny}
{\sc I. Podlubny},
{\it Fractional Differential Equations},
Math. Sci. Eng.,
Vol.~198,
Academic Press,
San Diego, 1999.

\bibitem{Umarov}
{\sc S. Umarov},
{\it The Fractional Duhamel Principle for Systems of Fractional Multi-Term Differential-Operator Equations},
Fract. Calc. Appl. Anal.
(2026).
https://doi.org/10.1007/s13540-026-00554-1.

\end{thebibliography}
\end{document}